\theoremstyle{plain}
\newtheorem{theorem}{Theorem}
\newtheorem{lemma}[theorem]{Lemma}
\newtheorem{corollary}[theorem]{Corollary}
\newtheorem{proposition}[theorem]{Proposition}
\newtheorem{definition}[theorem]{Definition}
\theoremstyle{definition}
\newtheorem{example}[theorem]{Example}
\newtheorem{remark}[theorem]{Remark}
\theoremstyle{remark}
\newcommand{\cB}{{\mathcal B}}
\newcommand{\cF}{{\mathcal F}}
\newcommand{\cI}{{\mathcal I}}
\newcommand{\cW}{{\mathcal W}}
\newcommand{\cX}{{\mathcal X}}
\newcommand{\bbE}{\mathbb{E}}
\newcommand{\bbN}{\mathbb{N}}
\newcommand{\bbP}{\mathbb{P}}
\newcommand{\bbR}{\mathbb{R}}
\newcommand{\bbZ}{\mathbb{Z}}
\newcommand{\eps}{\varepsilon}
\newcommand{\ind}{\mathbbm{1}}
\newcommand{\bsl}{\backslash}
\newcommand{\supp}{\mathop{\mathrm{supp}}\nolimits}
\newcommand{\Var}{\mathop{\mathrm{Var}}\nolimits}
\let \@fnsymbol\@arabic
\begin{document}

%\title{Non-singular flow representations and cone decompositions for stationary max-stable processes}
\title
[Ergodic decompositions of stationary max-stable processes]
{Ergodic decompositions of stationary max-stable processes in terms of their spectral functions}

\author{Cl\'ement Dombry}

\address{Univ.\ Bourgogne Franche-Comt\'e, Laboratoire de Math\'ematiques de Besan\c con, UMR CNRS 6623, 16 route de Gray,
25030 Besan\c con cedex, France}
\email{Email: clement.dombry@univ-fcomte.fr}

\author{Zakhar Kabluchko}

\address{Universit\"at M\"unster, Institut f\"ur Mathematische Statistik, Orl\'{e}ans-Ring 10, 48149 M\"unster, Germany} \email{zakhar.kabluchko@uni-muenster.de}

%\date{}
\begin{abstract}
We revisit conservative/dissipative and positive/null decompositions of stationary max-stable processes. Originally, both decompositions were defined in an abstract way based on the underlying non-singular flow representation. We provide simple criteria which allow to tell whether a given spectral function belongs to the conservative/dissipative or positive/null part of the de Haan spectral representation. Specifically, we prove that a  spectral function is null-recurrent iff it converges to $0$ in the Ces\`{a}ro sense. For processes with locally bounded sample paths we show that a spectral function is dissipative iff it converges to $0$. Surprisingly, for such processes a spectral function is integrable a.s.\ iff it converges to $0$ a.s. Based on these results, we provide new criteria for ergodicity, mixing, and existence of a mixed moving maximum representation of a stationary max-stable process in terms of its spectral functions. In particular, we study a decomposition of max-stable processes which characterizes the mixing property.
\end{abstract}

%\vspace{0.5cm}
%\noindent
%{\bf Key words:}
\keywords{max-stable random process, de Haan representation,  non-singular flow, conservative/dissipative decomposition, positive/null decomposition, ergodic process, mixing process, mixed moving maximum process}
%\noindent
%{\bf AMS Subject classification.
\subjclass[2010]{Primary: 60G70;  Secondary: 60G52, 60G60, 60G55, 60G10, 37A10, 37A25}
\maketitle

\section{Statement of main results}
\subsection{Introduction}
A stochastic process $(\eta(x))_{x\in \cX}$ on $\mathcal{X}=\mathbb{Z}^d$ or $\mathcal{X}=\mathbb{R}^d$ is called \textit{max-stable} if
\[
\frac 1 n \bigvee_{i=1}^n \eta_i \stackrel{f.d.d.}{=}\eta \quad \mbox{for all } n\geq 1,
\]
where $\eta_1, \ldots,\eta_n$ are i.i.d.\ copies of $\eta$, $\bigvee$ is the pointwise maximum, and $\stackrel{f.d.d.}{=}$ denotes the equality of finite-dimensional distributions. Max-stable processes arise naturally when considering limits for normalized pointwise maxima of independent and identically distributed (i.i.d.)\ stochastic processes and hence play a major role in spatial extreme value theory;
see, e.g.,\ de Haan and Ferreira \cite{dHF06}.
We restrict our attention to processes with non-degenerate (non-constant) margins.
% After applying shift transformations to the margins, we can assume that the lower endpoint of each $\eta(x)$ is $0$.  With these conventions,
The above definition implies that the marginal distributions of $\eta$ are $1$--Fr\'echet, that is
$$
\bbP[\eta(x)\leq z]={\rm{e}}^{-c(x)/z} \quad \mbox{for all }  z>0,
$$
where $c(x)>0$ is a scale parameter.   %A max-stable process is called \textit{simple} if $c(x)=1$ for all $x\in \mathcal {X}$.
%if it has standard unit Fr\'echet margins -- i.e.\ $\bbP[\eta(x)\leq z]=\exp(-1/z)$ for all $x\in\mathcal{X}$ and $z>0$ --
%a

A fundamental representation theorem by de Haan \cite{dH84} states that any stochastically continuous max-stable process $\eta$ can be represented (in distribution) as
\begin{equation}\label{eq:deHaan}
\eta(x)=\bigvee_{i\geq 1} U_iY_i(x),\quad x\in \mathcal{X},
\end{equation}
where
\begin{itemize}
\item[-] $(U_i)_{i\geq 1}$ is a decreasing enumeration of the points of a Poisson point process on $(0,+\infty)$ with intensity measure $u^{-2}\mathrm{d}u$,
\item[-] $(Y_i)_{i\geq 1}$, which are called the \textit{spectral functions},  are i.i.d.\ copies of a non-negative process $(Y(x))_{x\in \mathcal{X}}$ such that
$\bbE[Y(x)] < +\infty$ %$\bbE[Y(x)]=1$ $L^1$-continuous
for all $x\in \mathcal{X}$,
\item[-] the sequences $(U_i)_{i\geq 1}$ and $(Y_i)_{i\geq 1}$ are independent.
\end{itemize}

In this paper, we focus  on \emph{stationary} max-stable processes that play an important role for modelling purposes; see, e.g.,\ Schlather~\cite{S02}.
The structure of stationary max-stable processes was first investigated by  de Haan and Pickands~\cite{dHP86} who related them to non-singular flows (which are referred to as ``pistons'' in~\cite{dHP86}). Using the analogy between max-stable and sum-stable processes and the works of Rosi\'nski \cite{R95,R00}, Rosi\'nski and Samorodnitsky \cite{RS96} and Samorodnitsky \cite{S04,S05} on sum-stable processes,
the representation theory of stationary max-stable processes via non-singular flows was developed by Kabluchko \cite{K09},
Wang and  Stoev \cite{WS10,WS10_association},  Wang {\it et al.}\ \cite{WRS13}.
%An extension to sum- and max-infinitely divisible processes was given in Kabluchko and Stoev \cite{KS12}.
In these papers, the conservative/dissipative (or Hopf) and positive/null (or Neveu) decompositions from  non-singular ergodic theory were used to introduce the corresponding decompositions $\eta = \eta_C\vee \eta_D$ and $\eta = \eta_P\vee \eta_N$
of the  stationary max-stable process. These definitions were rather abstract (see Sections~\ref{app:CD} and~\ref{app:PN} where we shall recall them) and did not allow to distinguish between conservative/dissipative or positive/null cases by looking just at the spectral functions $Y_i$ from the de Haan representation~\eqref{eq:deHaan}.  The purpose of this paper is to provide a \textit{constructive} definition of these decompositions. Our main results in this direction can be summarized as follows.
In Section~\ref{app:CD} we shall prove that in the case when the sample paths of $\eta$ are a.s.\ locally bounded, a spectral function $Y_i$ belongs to the dissipative (=mixed moving maximum) part of the process if and only if $\lim_{x\to\infty} Y_i(x) = 0$.  The class of  locally bounded processes is sufficiently general for applications.  On the other hand, the assumption of local boundedness cannot be removed; see Example~\ref{eq:loc_bound}. In Section~\ref{app:PN} we shall prove that a spectral function $Y_i$ belongs to the null (=ergodic) part if and only if it converges to $0$ in the Ces\`{a}ro sense. In Section~\ref{sec:mixing}, we shall introduce one more decomposition which characterizes mixing.
%We shall provide simple criteria which allow to decide which spectral functions $Y_i$ from the de Haan representation~\eqref{eq:deHaan} belong to the positive/null or conservative/dissipative part.
%The purpose of this note is to provide new insights into the conservative/dissipative and positive/null decompositions of max-stable processes and to propose constructive definitions for a stationary max-stable process given by the representation \eqref{eq:deHaan}.
%A major result by Wang {\it et al.}\ \cite{WRS13} states that a stationary max-stable process is ergodic if and only if it is generated by a null flow.

\subsection{Ergodic properties of max-stable processes}\label{subsec:ergodic_properties}
Our results can be used to give new criteria for ergodicity, mixing, and existence of mixed moving maximum representation of max-stable processes. These criteria extend and simplify the results of Stoev~\cite{S08}, Kabluchko and Schlather~\cite{KS10} and Wang {\it et al.}\ \cite{WRS13}.

In the following, $(\eta(x))_{x\in \cX}$ denotes a stationary, stochastically continuous max-stable process on  $\mathcal{X}=\mathbb{Z}^d$ or $\mathbb{R}^d$ with de Haan representation~\eqref{eq:deHaan}. In the case when $\mathcal{X}=\mathbb{R}^d$, the process $Y$ is continuous in $L^1$ by Lemma~2 in~\cite{dH84}.
Since continuity in $L^1$ implies stochastic continuity and since every stochastically continuous  process has a measurable and separable version, we shall tacitly assume throughout the paper that both $\eta$ and $Y$ are measurable and separable processes. These assumptions (as well as the assumption of stochastic continuity) are empty (and can be ignored) in the discrete case $\mathcal{X}=\mathbb{Z}^d$.

Our first result is a characterization of ergodicity. Let $\lambda(\mathrm{d}x)$ be the counting measure on $\mathbb{Z}^d$ (in the discrete-time case) or the Lebesgue measure on $\mathbb{R}^d$ (in the continuous-time case), respectively. For $r>0$, write $B_r=[-r,r]^d\cap \mathcal{X}$.
\begin{theorem}\label{theo:ergodicity}
For a stationary, stochastically continuous max-stable process $\eta$ the following conditions are equivalent:
\begin{itemize}
\item[(a)] $\eta$ is ergodic;
\item[(b)] $\eta$ is weakly mixing;
\item[(c)] $\eta$ has no positive recurrent component in its spectral representation, that is $\eta_P=0$;
\item[(d)] $\lim_{r\to\infty} \frac 1{\lambda(B_r)} \int_{B_r} \mathbb{E} [Y(x)\wedge Y(0)] \lambda(\mathrm{d} x) = 0$;
\item[(e)] $\lim_{r\to\infty} \frac 1{\lambda(B_r)} \int_{B_r} Y(x) \lambda(\mathrm{d} x) = 0$ in probability;
\item[(f)] $\liminf_{r\to\infty} \frac 1{\lambda(B_r)} \int_{B_r} Y(x) \lambda(\mathrm{d} x) =0$ almost surely.
\end{itemize}
\end{theorem}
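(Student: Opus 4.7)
The plan is to reduce everything to the constructive characterization of null recurrent spectral functions established in Section~\ref{app:PN}, together with two known results for stationary max-stable processes: the identification of ergodicity with the absence of a positive recurrent component in the spectral representation (\cite{K09,WRS13}) and the coincidence of ergodicity and weak mixing due to Stoev~\cite{S08}.

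I would first settle the core chain (c) $\Leftrightarrow$ (e) $\Leftrightarrow$ (f). Since the spectral functions $Y_i$ in \eqref{eq:deHaan} are i.i.d.\ copies of $Y$, the component $\eta_P$ vanishes if and only if $Y$ itself is null recurrent almost surely, a $0$--$1$ event by independence. By the forthcoming Section~\ref{app:PN}, this is equivalent to $\frac{1}{\lambda(B_r)}\int_{B_r} Y(x)\,\lambda(\mathrm{d}x)\to 0$ almost surely, which trivially implies both (e) and (f). For the converses, (e) $\Rightarrow$ (c) follows by extracting an a.s.\ convergent subsequence and using that null recurrence is a measurable property of $Y$, while (f) $\Rightarrow$ (c) requires the sharper form of the characterization in Section~\ref{app:PN}, namely that a.s.\ vanishing of $\liminf_{r\to\infty}$ of the Ces\`{a}ro averages already forces null recurrence of $Y$. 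Next, (c) $\Leftrightarrow$ (d) is obtained by applying Fubini to rewrite (d) as $L^1$--Ces\`{a}ro convergence of the bounded nonnegative process $Y(\cdot)\wedge Y(0)$ to $0$; conditioning on $Y(0)$, applying the Section~\ref{app:PN} criterion to $Y(\cdot)\wedge Y(0)$, and invoking dominated convergence (with the integrable dominator $Y(0)$) yield the desired equivalence.

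Finally, for the top of the chain, (a) $\Leftrightarrow$ (b) $\Leftrightarrow$ (c), I would invoke the known facts. Stoev's criterion gives (b) $\Leftrightarrow$ (d), which combined with the previous step yields (b) $\Leftrightarrow$ (c); its derivation rests on the identity
\[
-\log \mathbb{P}[\eta(0)\leq 1,\eta(x)\leq 1]=\mathbb{E}[Y(0)]+\mathbb{E}[Y(x)]-\mathbb{E}[Y(0)\wedge Y(x)],
\]
which converts weak mixing of the bivariate marginals into a Ces\`{a}ro-type condition on $\mathbb{E}[Y(0)\wedge Y(x)]$. On the other hand, the spectral decomposition of stationary max-stable processes via the non-singular flow shows that $\eta=\eta_P\vee\eta_N$ is a decomposition into \emph{independent} stationary max-stable components with $\eta_P$ itself non-ergodic whenever nontrivial, which forces (a) $\Leftrightarrow$ (c). The main obstacle I expect is the implication (f) $\Rightarrow$ (c): promoting a.s.\ vanishing $\liminf$ of Ces\`{a}ro averages of $Y$ to actual almost sure null recurrence is the substantive content of the constructive description of the positive/null decomposition, and relies on a tail $0$--$1$ argument tied to the non-singular flow representation underlying $\eta$.
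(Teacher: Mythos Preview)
Your overall structure is sound, and the route for (c) $\Leftrightarrow$ (f) and (e) $\Rightarrow$ (f) matches the paper's exactly. The genuine gap is in (c) $\Rightarrow$ (e). You assert that Section~\ref{app:PN} will show null recurrence of $Y$ is equivalent to \emph{almost sure} convergence $\frac{1}{\lambda(B_r)}\int_{B_r}Y(x)\,\lambda(\mathrm{d}x)\to 0$, from which (e) would indeed be trivial. But that is not what Section~\ref{app:PN} delivers: on the null part one applies Krengel's stochastic ergodic theorem for non-singular actions, which gives convergence of the Ces\`aro averages only \emph{in probability} (in the flow representation). From this one extracts a subsequence to obtain $\liminf=0$ a.s., i.e.\ condition~(f); almost sure convergence of the full sequence is not available in the null-recurrent setting. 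So (c) does not ``trivially'' imply (e), and (f) by itself certainly does not imply (e).

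The paper's proof of (c) $\Rightarrow$ (e) is a separate, nontrivial step you are missing: one passes to a \emph{minimal} flow representation $(f_x)$ on a probability space $(S^*,\mu^*)$, where Krengel's theorem gives $M_r:=\frac{1}{\lambda(B_r)}\int_{B_r}f_x\,\lambda(\mathrm{d}x)\to 0$ in $\mu^*$-probability, and then invokes the mapping theorem \cite[Theorem~3.2]{WS10} to write any de~Haan spectral function as $Y(x;s)=h(s)f_x(\Phi(s))$ and transfer convergence in probability back to $Y$. Without this transfer you only know that \emph{some} representation satisfies (e), not the arbitrary one in the statement. Your proposed direct argument for (c) $\Leftrightarrow$ (d) via ``applying the Section~\ref{app:PN} criterion to $Y(\cdot)\wedge Y(0)$'' inherits the same defect---and in any case $Y\wedge Y(0)$ is not a spectral function of a stationary max-stable process, so the criterion does not apply to it; the paper simply cites the (a)--(d) block as known and concentrates on (c), (e), (f).
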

The equivalence of (a), (b), (c), (d) in Theorem~\ref{theo:ergodicity} was known before (see Theorem~3.2 in~\cite{KS10} for the equivalence of (a), (b), (d) in the case $d=1$,  Theorem~8 in~\cite{K09} for the equivalence of (a) and (c) in the case $d=1$, and Theorem~5.3 in~\cite{WRS13} for an extension to the $d$-dimensional case). We shall prove in Section~\ref{app:CD} that (c), (e), (f) are equivalent by exploiting a new characterization of the positive/null decomposition.

The next theorem characterizes mixing (which is a stronger property than ergodicity).
\begin{theorem}\label{theo:mixing}
For a stationary, stochastically continuous max-stable process $\eta$ the following conditions are equivalent:
\begin{itemize}
\item[(a)] $\eta$ is mixing;
\item[(b)] $\eta$ is mixing of all orders;
%\item $\eta$ has no $N_+$-component in its spectral representation, that is $\eta_{N_+}=0$;
\item[(c)] $\lim_{x\to\infty} \mathbb{E} [Y(x)\wedge Y(0)] = 0$;
\item[(d)] $\lim_{x\to\infty} Y(x) = 0$ in probability.
%\item[(5)] $\liminf_{x\to\infty} Y(x) = 0$ almost surely.
\end{itemize}
\end{theorem}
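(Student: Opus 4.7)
The plan is to obtain (a) $\Leftrightarrow$ (b) $\Leftrightarrow$ (c) by revisiting the explicit exponent-measure formula for max-stable processes, essentially replaying arguments of Stoev~\cite{S08} and Kabluchko--Schlather~\cite{KS10}, and to establish (c) $\Leftrightarrow$ (d) separately; the latter is the genuinely new ingredient. For (a) $\Leftrightarrow$ (c), one reduces mixing to bivariate mixing (standard for max-stable processes) and notes
$$-\log \mathbb{P}(\eta(0) \le z,\ \eta(x) \le z) = \frac{1}{z}\mathbb{E}[Y(0) \vee Y(x)] = \frac{1}{z}\bigl(2\mathbb{E}[Y(0)] - \mathbb{E}[Y(0) \wedge Y(x)]\bigr),$$
so bivariate mixing is equivalent to (c). The equivalence (a) $\Leftrightarrow$ (b) follows from the same identity applied to arbitrary finite collections: by inclusion--exclusion, $\mathbb{E}[\bigvee_j Y(x_j)/z_j]$ decomposes into alternating sums of $\mathbb{E}[\bigwedge_{j\in J} Y(x_j)/z_j]$, each of which is dominated by any pairwise $\mathbb{E}[Y(x_j) \wedge Y(x_{j'})]/(z_j \wedge z_{j'})$; hence vanishing of all pair minima forces vanishing of all higher-order ones and delivers mixing of all orders.

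The implication (d) $\Rightarrow$ (c) is immediate by dominated convergence: $Y(x) \wedge Y(0) \le Y(0) \in L^1$, and $Y(x) \wedge Y(0) \to 0$ in probability once $Y(x) \to 0$ in probability, so $\mathbb{E}[Y(x) \wedge Y(0)] \to 0$ by dominated convergence.

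The main obstacle is (c) $\Rightarrow$ (d), where we must upgrade joint control of $\mathbb{E}[Y(x)\wedge Y(0)]$ to individual smallness of $Y(x)$. My approach is by contradiction: if (d) fails, pick $\eps, \delta > 0$ and a sequence $x_n \to \infty$ with $\mathbb{P}(Y(x_n) > \eps) \ge \delta$. The key auxiliary tool is the stationarity identity
$$\mathbb{E}[Y(u) \wedge Y(v)] = \mathbb{E}[Y(u-v) \wedge Y(0)],$$
which follows because $-\log \mathbb{P}(\eta(u) \le 1, \eta(v) \le 1) = \mathbb{E}[Y(u) \vee Y(v)]$ depends only on $u-v$ by stationarity of $\eta$ (although $Y$ itself is not stationary). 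Using (c), extract a subsequence of $(x_n)$ so sparse that $\mathbb{E}[Y(x_n - x_m) \wedge Y(0)] \le 2^{-n-m}$ for $m < n$; by Markov,
$$\mathbb{P}(Y(x_n) > \eps,\ Y(x_m) > \eps) \le \eps^{-1} \mathbb{E}[Y(x_n) \wedge Y(x_m)] \le \eps^{-1} 2^{-n-m},$$
and Bonferroni's inequality then yields
$$1 \ge \mathbb{P}\Bigl(\bigcup_{n=1}^N \{Y(x_n) > \eps\}\Bigr) \ge N\delta - \eps^{-1} \sum_{1 \le n < m \le N} 2^{-n-m} \ge N\delta - C\eps^{-1},$$
which is absurd once $N$ is large. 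This contradiction establishes (c) $\Rightarrow$ (d).

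The crux of the argument is thus Bonferroni's inequality combined with the stationarity identity, which transforms a bivariate decay rate for $\mathbb{E}[Y(\cdot)\wedge Y(0)]$ into pairwise control along a sequence, and then into individual uniform smallness of $Y(x)$. The other equivalences reduce either to dominated convergence or to the standard pairwise-mixing characterization for max-stable processes.
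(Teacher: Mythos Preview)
Your proposal is correct. The treatment of (a) $\Leftrightarrow$ (b) $\Leftrightarrow$ (c) and (d) $\Rightarrow$ (c) matches the paper's: the former is attributed to Stoev and Kabluchko--Schlather, and the latter is the same domination argument (the paper phrases it via uniform integrability, which is the precise statement needed when the convergence is only in probability, but your version is equivalent).

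For (c) $\Rightarrow$ (d), however, your argument is genuinely different from the paper's. The paper proceeds directly: it uses the assumption $\mathbb{P}[Y\equiv 0]=0$ to find finitely many points $x_1,\dots,x_k$ and a level $\alpha>0$ with $\mathbb{P}[Y(x_1)+\dots+Y(x_k)>\alpha]\geq 1-\eps/2$, then combines this with $\lim_{x\to\infty}\mathbb{E}[Y(x)\wedge(Y(x_1)+\dots+Y(x_k))]=0$ (via the stationarity identity and the subadditivity $(a_1+\dots+a_k)\wedge b\leq\sum a_i\wedge b$) and Markov's inequality to bound $\mathbb{P}[Y(x)>\delta]$ directly. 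Your route instead argues by contradiction, extracting a sparse subsequence along which the events $\{Y(x_n)>\eps\}$ are nearly pairwise disjoint (via Markov on $\mathbb{E}[Y(x_n)\wedge Y(x_m)]$) and then invoking Bonferroni to force the union to have probability exceeding $1$. Both approaches hinge on the same stationarity identity $\mathbb{E}[Y(u)\wedge Y(v)]=\mathbb{E}[Y(u-v)\wedge Y(0)]$. Your argument is perhaps more elementary and avoids the auxiliary step of locating points where $Y$ is large; the paper's argument is constructive and yields an explicit bound on $\mathbb{P}[Y(x)>\delta]$ for large $|x|$ without passing to subsequences.
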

The equivalence of (a), (b), (c) in Theorem~\ref{theo:MMM}   was known before (see Theorem~3.4 in~\cite{S08} for the equivalence of (a) and (c), and Theorem~1.1 in~\cite{KS10} for the equivalence of (a) and (b)). We shall prove in Section~\ref{app:PN} that (c) is equivalent to (d). Moreover, we shall introduce a decomposition of the process $\eta$ into a mixing part and a part containing no mixing components.

Finally, we can characterize the mixed moving maximum property. The definition of this property will be recalled in Section~\ref{app:CD}.
\begin{theorem}\label{theo:MMM}
For a stationary, stochastically continuous max-stable process $\eta$ with locally bounded sample paths, the following conditions are equivalent:
\begin{itemize}
\item[(a)] $\eta$ has a mixed moving maximum representation;
\item[(b)] $\eta$ has no conservative component in its spectral representation, that is $\eta_C=0$;
\item[(c)] $\int_{\mathcal{X}} Y(x) \lambda(\mathrm{d} x) < +\infty$ almost surely;
\item[(d)] $\lim_{x\to\infty} Y(x) = 0$ almost surely.
\end{itemize}
\end{theorem}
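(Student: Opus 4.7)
The plan is to chain the four conditions by establishing (a)$\Leftrightarrow$(b), (b)$\Leftrightarrow$(d), and (c)$\Leftrightarrow$(d) separately; each piece relies on the material promised for Section~\ref{app:CD}. The equivalence (a)$\Leftrightarrow$(b) is the classical bridge between mixed moving maximum representations and Hopf's dissipative/conservative decomposition of the non-singular flow underlying $\eta$: an MMM representation is the prototypical dissipative action of $\bbR^d$ (or $\bbZ^d$) on a product space by translation, so (a) forces $\eta_C=0$; conversely, once $\eta=\eta_D$, the structure theorem of Wang--Stoev together with local boundedness of the sample paths lets one realize $\eta_D$ itself as an MMM process.

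Granting the Section~\ref{app:CD} characterization of the dissipative part, the equivalence (b)$\Leftrightarrow$(d) should then be immediate: that section promises that, under local boundedness, a spectral function $Y_i$ belongs to the dissipative component iff $\lim_{x\to\infty}Y_i(x)=0$. Since $\eta_C=0$ is by definition the statement that \emph{every} spectral function of the de Haan representation is dissipative, this passes from (b) to the almost sure decay of $Y$, i.e.\ (d), and conversely. The passage from one to every spectral function is justified by the fact that the law of $Y$ is uniquely determined up to positive scaling, so that the property ``$Y\to 0$ a.s.'' is a $0$--$1$ alternative on the Poisson intensity.

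The main obstacle is the equivalence (c)$\Leftrightarrow$(d), which the abstract correctly flags as surprising. The direction (c)$\Rightarrow$(d) should use local boundedness through the auxiliary function $M(x)=\sup_{y\in x+B_1}Y(y)$, which is finite everywhere; integrability of $Y$ against $\lambda$ controls integrability of $M$, from which $M(x)\to 0$ (and hence $Y(x)\to 0$) along a suitable tail argument can be extracted. The subtle direction is (d)$\Rightarrow$(c), since a deterministic locally bounded function tending to $0$ need not be integrable (e.g.\ $1/|x|$ on $\bbR$), so the proof must exploit probabilistic structure that is unavailable in the deterministic setting. My plan would be to use the Palm-measure viewpoint on the Poisson point process $\{U_iY_i\}$: stationarity of $\eta$ translates into a shift-invariance of the underlying $\sigma$-finite spectral measure, local boundedness of $\eta$ supplies a quantitative bound on $\bbE[\sup_{B_r}Y]$, and the dissipative characterization from (b)$\Leftrightarrow$(d) excludes non-integrable ``oscillating'' behaviour. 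Combining these ingredients by Fubini should yield $\bbE\!\int_{\cX}Y\,\lambda(\mathrm{d}x)<\infty$, hence (c). This is precisely the step where Example~\ref{eq:loc_bound} is expected to show that local boundedness cannot be dropped, and I anticipate it to be the main technical hurdle of the proof.
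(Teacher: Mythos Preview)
Your chain (a)$\Leftrightarrow$(b) and the reduction of (b)$\Leftrightarrow$(d) to Proposition~\ref{prop} are fine and match the paper. The genuine gap is in your direct attack on (c)$\Leftrightarrow$(d).

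For (d)$\Rightarrow$(c), your target $\bbE\int_{\cX}Y(x)\,\lambda(\mathrm{d}x)<\infty$ is simply false. Since $\bbE[Y(x)]$ equals the Fr\'echet scale parameter $c(x)>0$ of $\eta(x)$, Fubini gives $\bbE\int_{\cX}Y\,\mathrm{d}\lambda=\int_{\cX}c(x)\,\lambda(\mathrm{d}x)=\infty$ whenever $\lambda(\cX)=\infty$. The random integral in (c) is a.s.\ finite but has infinite mean, so no Palm/Fubini argument can close this direction by computing an expectation. The paper instead proves (d)$\Rightarrow$(a) constructively: if $Y\to 0$ a.s.\ and is locally bounded, then $\sup_{\cX}Y$ is attained, and one manufactures an M3 representation by setting $X_i=\argmax Y_i$, $V_i=U_i\max Y_i$, $Z_i=Y_i(X_i+\cdot)/\max Y_i$, checking the intensity as in~\cite[Theorem~14]{KSdH09}. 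Then (a)$\Rightarrow$(c) is the known equivalence.

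For (c)$\Rightarrow$(d), your claim that integrability of $Y$ ``controls integrability of $M(x)=\sup_{y\in x+B_1}Y(y)$'' is not a deterministic implication (narrow tall spikes make $\int Y<\infty$ while $\int M=\infty$), so it needs the stochastic structure. The paper does not argue on $Y$ directly: it uses (c)$\Rightarrow$(a) to pass to the M3 shape function $Z_D$, and then local boundedness of $\eta$ gives $\theta_D(K)=\bbE\int_{\cX}\sup_{u\in K}Z_D(u-y)\,\lambda(\mathrm{d}y)<\infty$ via~\eqref{eq:theta_K}. This yields $Z_D\in\cF_D'$ a.s., hence $Y\ind_{\{Y\in\cF_D\}}\in\cF_D'\subset\tilde\cF_D$ a.s. Note also a structural redundancy in your plan: since (b)$\Leftrightarrow$(c) is already part of the known equivalences, establishing (c)$\Leftrightarrow$(d) (i.e.\ Proposition~\ref{prop}) automatically gives (b)$\Leftrightarrow$(d); you do not need a separate argument for the latter.
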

The equivalence of (a), (b), (c) in Theorem~\ref{theo:MMM} was known before and holds even without the assumption of local boundedness (see Sections~\ref{subsec:def_CD}, \ref{subsec:3.2} and the references therein). Our main contribution is an alternative characterization of the conservative/dissipative decomposition stated in Proposition~\ref{prop} that implies the equivalence of (c) and (d).  This equivalence may look strange at a first glance because neither (c) implies (d) nor it is implied by (d) for a \emph{general} stochastic process $Y$.    However, the process $Y$ appearing in Theorems~\ref{theo:ergodicity}, \ref{theo:mixing}, \ref{theo:MMM} is subject to the restriction that it leads to a \emph{stationary} process $\eta$.  Processes $Y$ with this property were called Brown--Resnick stationary in~\cite{KSdH09}. Another restriction appearing in Theorem~\ref{theo:MMM} is the local boundedness of $\eta$.  This condition cannot be removed, as will be shown in Example~\ref{eq:loc_bound}.  A special case of the implication (d) $\Rightarrow$ (c) when $\log Y$ is a Gaussian process with stationary increments and certain drift was obtained in~\cite[Theorem~7.1]{WS10}.

The rest of the paper is structured as follows. Section~\ref{sec:preliminaries} is devoted to preliminaries on non-singular ergodic theory and cone decompositions for max-stable processes. Section~\ref{app:CD} reviews known results on the conservative/dissipative decompositions and provides  an alternative definition via a simple cone decomposition with an emphasis on the case of locally bounded max-stable processes.  Section~\ref{app:PN} introduces the positive/null decomposition and proposes an alternative construction via another simple cone decomposition.
%In Section~\ref{sec:proofs_criteria} we prove Theorems~\ref{theo:ergodicity}, \ref{theo:mixing}, \ref{theo:MMM}.
In Section~\ref{sec:mixing} we study mixing.

%In the following, let $\eta=(\eta(x))_{x\in\cX}$ be a stochastically continuous, stationary, max-stable process on  $\cX=\mathbb{Z}^d$ or $\cX=\mathbb{R}^d$.

\section{Preliminaries}\label{sec:preliminaries}

\subsection{Non-singular flow representations of max-stable processes}
We recall some information on non-singular flow representations of stationary max-stable processes.
For more details on non-singular ergodic theory, the reader should refer to Krengel~\cite{K85}, Aaronson~\cite{A98} or Danilenko and Silva~\cite{danilenko_silva}.

\begin{definition}
A measurable non-singular flow on a measure space $(S,\cB,\mu)$ is a family of functions $\phi_x:S\to S$, $x\in \mathcal{X}$, satisfying
\begin{itemize}
\item[(i)] (flow property) for all $s\in S$ and $x_1,x_2\in \mathcal{X}$,
\[
\phi_0(s)=s \quad \mbox{and} \quad \phi_{x_1+x_2}(s)=\phi_{x_2}(\phi_{x_1}(s));
\]
\item[(ii)] (measurability) the mapping $(x,s)\mapsto \phi_x(s)$ is measurable from $\mathcal{X}\times S$ to $S$;
\item[(iii)] (non-singularity) for all $x\in \mathcal{X}$, the measures $\mu \circ\phi_x^{-1}$ and $\mu$ are equivalent, i.e.\ for all $A\in\cB$, $\mu(\phi_x^{-1}(A))=0$ if and only if $\mu(A)=0$.
\end{itemize}
\end{definition}
The non-singularity property ensures that one can define the Radon--Nikodym derivative
\begin{equation}\label{eq:RN}
\omega_x(s)=\frac{\mathrm{d}(\mu \circ\phi_x)}{\mathrm{d}\mu}(s).
\end{equation}
By the measurability property, one may assume that the mapping $(x,s)\mapsto \omega_x(s)$ is jointly measurable on $\mathcal{X}\times S$.

According to  de Haan and Pickands~\cite{dHP86}, see also~\cite{K09} and~\cite{WS10}, any stochastically continuous \emph{stationary} max-stable process $\eta$ admits a (distributional) representation of the form
\begin{equation}\label{eq:flow}
\eta(x)=\bigvee_{i\geq 1} U_if_x(s_i),\quad x\in \mathcal{X},
\end{equation}
where $f_x(s)=\omega_x(s)f_0(\phi_x(s))$ and
\begin{itemize}
\item[-] $(\phi_x)_{x\in \mathcal{X}}$ is a measurable non-singular flow on some $\sigma$-finite measure space $(S,\cB,\mu)$, with $\omega_x(s)$ defined by \eqref{eq:RN},
\item[-] $f_0\in L^1(S,\cB,\mu)$ is non-negative
%such that $\int_S f_0 \mathrm{d}\mu < +\infty$
such that the set $\{f_0=0\}$ contains  no $(\phi_x)_{x\in\cX}$--invariant set $B\in \cB$ of positive measure,
\item[-] $\{(s_i,U_i)\}_{i\geq 1}$ is some enumeration of the points of the Poisson point process on $S\times (0,+\infty)$ with intensity $\mu(\mathrm{d}s)\times u^{-2}\mathrm{d}u$.
\end{itemize}

If $(S,\cB,\mu)$ is a \emph{probability} space, the point process $\{(s_i,U_i)\}_{i\geq 1}$ can be generated by taking $(s_i)_{i\geq 1}$ to be i.i.d.\ random elements in $S$ with probability distribution $\mu$, that are independent from $(U_i)_{i\geq 1}$.
Thus, one easily recovers the de Haan representation~\eqref{eq:deHaan} by considering the i.i.d.\ stochastic processes
$Y_i(x)=f_x(s_i)$, $i\geq 1$.

The flow representation \eqref{eq:flow} is comonly written as an extremal integral
\begin{equation}\label{eq:eint}
\eta(x)=\int^{\rm{e}}_{S} f_x(s)M(\mathrm{d}s) ,\quad x\in \mathcal{X},
\end{equation}
where $M(\mathrm{d}s)$ denotes a $1$-Fr\'echet random sup-measure on $(S,\cB)$ with control measure $\mu$. The reader should refer to Stoev and Taqqu \cite{ST05} for more details on extremal integrals.
In the present paper, one can simply view the extremal integral \eqref{eq:eint} as a shorthand for the pointwise maximum over a Poisson point process \eqref{eq:flow}.

\subsection{Cone-based decompositions}
In the spirit of  Wang and Stoev \cite[Theorem 4.2]{WS10} and Dombry and Kabluchko \cite[Lemma 16]{DK15a}, we shall use decompositions of max-stable processes based on cones.
We denote by $\cF_0=\cF( \mathcal{X},[0,+\infty))\setminus\{0\}$ the set of  non-negative measurable functions on $\mathcal{X}$ excluding the zero function.
A subset $\mathcal{C}\subset \mathcal{F}_0$ is called a {\it cone} if for all $f\in\mathcal{C}$ and $u>0$, $uf\in\mathcal{C}$.
The cone $\mathcal{C}$ is said to be {\it shift-invariant} if for all $f\in\mathcal{C}$ and $x\in\mathcal{X}$ we have $f(\cdot+x)\in\mathcal{C}$.
\begin{lemma}[Lemma 16 in~\cite{DK15a}]\label{lem:dec}
Let $\mathcal{C}_1$ and $\mathcal{C}_2$ be two shift-invariant cones such that $\mathcal{F}_0=\mathcal{C}_1\cup\mathcal{C}_2$ and $\mathcal{C}_1\cap\mathcal{C}_2=\varnothing$.
Let $\eta$ be a stationary max-stable process given by representation \eqref{eq:deHaan} such that the events $\{Y_i\in \mathcal{C}_1\}$ and $\{Y_i\in \mathcal{C}_2\}$ are measurable. Consider the decomposition $\eta=\eta_1\vee \eta_2$ with
\[
\eta_1(x)=\bigvee_{i\geq 1} U_iY_i(x)\ind_{\{Y_i\in\mathcal{C}_1\}}\quad \mbox{and}\quad\eta_2(x)=\bigvee_{i\geq 1} U_iY_i(x)\ind_{\{Y_i\in\mathcal{C}_2\}}.
\]
Then, $\eta_1$ and $\eta_2$ are stationary and independent max-stable processes whose  distribution depends only on the distribution of $\eta$ and not on the specific representation \eqref{eq:deHaan}.
\end{lemma}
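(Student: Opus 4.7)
The plan is to treat the decomposition as a thinning of the underlying Poisson point process (PPP) and reduce the three claims (max-stability/independence, stationarity, distributional uniqueness) to standard PPP facts together with the uniqueness of the exponent measure of $\eta$.

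First I would note that $\{(U_i, Y_i)\}_{i\geq 1}$ is a PPP $\Pi$ on $(0,+\infty)\times\cF_0$ with intensity $u^{-2}\,\mathrm{d}u \otimes \mathbb{P}_Y$, where $\mathbb{P}_Y$ denotes the law of $Y$. The measurability hypothesis on $\{Y_i\in\cC_j\}$ allows one to invoke the classical independent thinning theorem: the restricted configurations $\Pi_j=\{(U_i,Y_i):Y_i\in\cC_j\}$, $j=1,2$, are independent PPPs with intensities obtained by restricting $u^{-2}\,\mathrm{d}u\otimes\mathbb{P}_Y$ to the respective strata. Since $\eta_j$ is a deterministic measurable functional of $\Pi_j$, the processes $\eta_1$ and $\eta_2$ are independent, and each is max-stable because it is a PPP maximum with $u^{-2}\,\mathrm{d}u$ in the radial coordinate.

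For stationarity, the shift-invariance of the cones is exactly what is needed. Writing $\theta_x f(\cdot)=f(\cdot+x)$, the identity $\{Y_i\in\cC_j\}=\{\theta_x Y_i\in\cC_j\}$ gives
\[
\eta_j(\cdot+x)=\bigvee_{i\geq 1} U_i(\theta_x Y_i)(\cdot)\ind_{\{\theta_x Y_i\in\cC_j\}}.
\]
Stationarity of $\eta$ forces $\{(U_i,\theta_x Y_i)\}_{i\geq 1}\stackrel{d}{=}\{(U_i,Y_i)\}_{i\geq 1}$ as point processes, so $\eta_j(\cdot+x)\stackrel{d}{=}\eta_j(\cdot)$.

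The step I expect to be the main obstacle is the assertion that the law of $\eta_j$ depends only on the law of $\eta$ and not on the chosen de Haan representation \eqref{eq:deHaan} (which is notoriously non-unique). The clean way to handle this is to pass to the exponent measure $\mu^*$ of $\eta$ on $\cF_0$, defined as the image of $u^{-2}\,\mathrm{d}u\otimes\mathbb{P}_Y$ under $(u,y)\mapsto uy$; it is a classical fact that $\mu^*$ is uniquely determined by the finite-dimensional distributions of $\eta$. Because each $\cC_j$ is scale-invariant (it is a cone), the restrictions $\mu^*|_{\cC_j}$ are well-defined and yield $\mu^*=\mu^*|_{\cC_1}+\mu^*|_{\cC_2}$. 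One recognizes $\eta_j$ as the max-stable process with exponent measure $\mu^*|_{\cC_j}$; their laws therefore depend only on $\eta$. The same viewpoint re-derives independence (mutual singularity of the two restrictions) and stationarity (the shift action on $\cF_0$ preserves $\mu^*$ by stationarity of $\eta$ and preserves each $\cC_j$ by hypothesis), giving a conceptually cleaner wrap-up of all three claims.
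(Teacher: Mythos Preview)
The paper does not supply its own proof of this lemma; it merely cites it from \cite{DK15a}. So there is no in-paper argument to compare against, and your write-up has to stand on its own.

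Your overall strategy---thinning of the Poisson point process for independence/max-stability, and passage to the exponent measure $\mu^*$ for representation-independence---is correct and is the standard route. The third paragraph is the heart of the matter and is fine: the exponent measure on $\cF_0$ is determined by the law of $\eta$, the cones are homogeneous so the restrictions $\mu^*|_{\cC_j}$ make sense, and shift-invariance of $\mu^*$ together with shift-invariance of the $\cC_j$ gives stationarity of each component.

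There is, however, a genuine slip in your second paragraph. You assert that stationarity of $\eta$ forces $\{(U_i,\theta_x Y_i)\}\stackrel{d}{=}\{(U_i,Y_i)\}$ as point processes on $(0,\infty)\times\cF_0$. That would require $\bbP_Y\circ\theta_x^{-1}=\bbP_Y$, i.e.\ that $Y$ itself is stationary, which is \emph{not} assumed (and is typically false: think of the Brown--Resnick spectral functions~\eqref{eq:def_Brown_Resnick_Y}). Stationarity of $\eta$ only gives equality of the image point processes $\{U_iY_i\}$ and $\{U_i\theta_xY_i\}$ on $\cF_0$, equivalently $\theta_x$-invariance of $\mu^*$. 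That weaker statement is exactly what you use in the third paragraph, so the fix is simply to drop the erroneous claim in paragraph two and run the stationarity argument entirely through $\mu^*$, as you already sketch at the end.
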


\section{Conservative/dissipative decomposition} \label{app:CD}
%and mixed moving maximum representation}
\subsection{Definition of the conservative/dissipative decomposition}\label{subsec:def_CD}
We recall  the Hopf (or conservative/dissipative) decomposition from non-singular ergodic theory; see Aaronson \cite{A98}. We start with the discrete case $\mathcal{X}=\mathbb{Z}^d$.
\begin{definition}
Consider a measure space $(S,\cB,\mu)$ and a non-singular flow $(\phi_x)_{x\in\mathbb{Z}^d}$. A measurable set $W\subset S$ is said to be
{\it wandering} if the sets $\phi_x^{-1}(W)$, $x\in\mathbb{Z}^d$, are disjoint.
\end{definition}
The Hopf decomposition theorem states that there exists a  partition of $S$
into two disjoint measurable sets $S=C\cup D$, $C\cap D=\varnothing$, such that
\begin{itemize}
\item[(i)] $C$ and $D$ are $(\phi_x)_{x\in\mathbb Z^d}$--invariant,
\item[(ii)] there exists no wandering set $W\subset C$ with positive measure,
\item[(iii)] there exists a wandering set $W_0\subset D$ such that $D=\cup_{x\in\mathbb Z^d}\phi_{x}(W_0)$.
\end{itemize}
This decomposition is unique mod $\mu$ and is called the \textit{Hopf decomposition} of $S$ associated with the flow $(\phi_x)_{x\in\mathbb{Z}^d}$; the sets $C$ and $D$ are called the \textit{conservative} and \textit{dissipative} parts respectively.
In the case when $\mathcal{X}=\mathbb{R}^d$, we follow Roy~\cite{R10} by defining the Hopf decomposition of $S$ associated with a measurable flow $(\phi_x)_{x\in\mathbb{R}^d}$ as the Hopf decomposition associated with the discrete skeleton flow $(\phi_x)_{x\in\mathbb{Z}^d}$.

%we can consider the restriction of the flow to the lattice $2^{-n}\mathbb{Z}^d\subset\mathbb{R}^d$, $n\in \mathbb {N}_0$: the conservative (resp.\ dissipative) parts of the restricted flows $(\phi_x)_{x\in 2^{-n}\mathbb{Z}^d}$ are all equal modulo $\mu$ (see Roy \cite[Proposition 2.1]{R10}). One thus define the Hopf decomposition of $S$ associated with the flow $(\phi_x)_{x\in\mathbb{R}^d}$ as the Hopf decomposition associated with the restricted flow $(\phi_x)_{x\in\mathbb{Z}^d}$.

One can then introduce the conservative/dissipative decomposition of the max-stable process $\eta$ given by \eqref{eq:flow}, \eqref{eq:eint}: we have $\eta=\eta_C\vee\eta_D$ with
\begin{equation}
\eta_C(x)=\int^{\rm {e}}_{C} f_x(s)M(\mathrm{d}s)\quad \mbox{and}\quad
\eta_D(x)=\int^{\rm {e}}_{D} f_x(s)M(\mathrm{d}s) ,\quad x\in \mathcal{X}.\label{eq:flow_CD}
\end{equation}
The processes $\eta_C$ and $\eta_D$ are independent and their distribution depends only on the distribution of $\eta$ and not on the particular choice of the representation \eqref{eq:flow}.

The importance of the conservative/dissipative decomposition comes from the notion of mixed moving maximum representation.
\begin{definition}\label{def:MMM}
A  stationary max-stable process $(\eta(x))_{x\in \mathcal X}$ is said to have a mixed moving maximum representation (shortly M3-representation) if
\[
\eta(x)\stackrel{f.d.d.}{=}\bigvee_{i\geq 1} V_i Z_i(x-X_i),\quad x\in \cX,
\]
where
\begin{itemize}
\item[-] $\{(X_i,V_i),i\geq 1\}$ is a Poisson point process on $\cX \times (0,+\infty)$ with intensity $\lambda(\mathrm{d}x)\times u^{-2}\mathrm{d}u$,
\item[-] $(Z_i)_{i\geq 1}$ are i.i.d.\ copies of a non-negative measurable stochastic process $Z$ on $\mathcal{X}$ satisfying $\bbE [\int_{\cX} Z(x)\lambda(\mathrm{d}x)] < +\infty$,
\item[-] $\{(X_i,V_i),i\geq 1\}$ and $(Z_i)_{i\geq 1}$ are independent.
\end{itemize}
\end{definition}
The following important theorem relates the dissipative/conservative decomposition and the existence of an M3-representation; see Wang and Stoev \cite[Theorem 6.4]{WS10} in the max-stable case with $d=1$ or Roy \cite[Theorem 3.4]{R10} in the sum-stable case with $d\geq 1$.
\begin{theorem}\label{theo:M3}
Let $\eta$ be a stationary max-stable process given by the non-singular flow representation \eqref{eq:flow}. Then, $\eta$ has an M3-representation if and only if $\eta$ is generated by a dissipative flow.
\end{theorem}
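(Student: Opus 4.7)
My plan is to establish the two directions separately, leveraging the fact stated in Section~\ref{subsec:def_CD} that the conservative/dissipative decomposition of $\eta$ depends only on the law of $\eta$ and not on the chosen flow representation.

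For the easy direction (M3 $\Rightarrow$ dissipative), I would observe that every M3-representation is itself a special case of the non-singular flow representation \eqref{eq:flow}: take $S=\cX\times \Omega$ with $(\Omega,\bbP_\Omega)$ the probability space of $Z$, let $\mu=\lambda\otimes \bbP_\Omega$, define the flow by $\phi_x(t,\zeta)=(t-x,\zeta)$ (translation in the first coordinate), and set $f_0(t,\zeta)=Z(\zeta)(t)$. In this representation the flow is manifestly dissipative: $W_0=\{0\}\times \Omega$ in the discrete case (or a fundamental domain $W_0=[0,1)^d\times \Omega$ in the continuous case) is a wandering set whose $(\phi_x)$-translates tile $S$. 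Consequently, in this representation one has $\eta=\eta_D$ and $\eta_C=0$. By the cited uniqueness of the C/D decomposition, $\eta_C=0$ in the original representation too, so the original flow is (essentially) supported on its dissipative part.

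For the harder direction (dissipative $\Rightarrow$ M3), assume $S=\bigsqcup_{y\in\bbZ^d}\phi_y(W_0)$ modulo $\mu$-null sets, with $W_0$ a wandering set. The strategy is to re-coordinatize the driving Poisson point process $\{(s_i,U_i)\}_{i\geq 1}$ through the bijection $W_0\times\bbZ^d \to S$, $(w,y)\mapsto \phi_y(w)$. Writing $s_i=\phi_{y_i}(w_i)$ and applying the cocycle identity $\omega_{x+y}(w)=\omega_y(w)\,\omega_x(\phi_y(w))$, one obtains
\[
U_i f_x(s_i)=U_i\,\omega_x(\phi_{y_i}(w_i))\,f_0(\phi_{x+y_i}(w_i))=V_i\,f_{x+y_i}(w_i), \qquad V_i := U_i/\omega_{y_i}(w_i).
\]
Setting $X_i:=-y_i$ and $Z_i(t):=f_t(w_i)$, this recasts $\eta(x)=\bigvee_i V_i Z_i(x-X_i)$. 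It then remains to verify, via the mapping theorem for Poisson processes (using the change of variable $\mu(\phi_y(A))=\int_A \omega_{-y}\,\mathrm{d}\mu$ together with the Jacobian of $U\mapsto V$), that $\{(X_i,V_i)\}_i$ forms a Poisson point process with the correct intensity $\lambda\otimes u^{-2}\mathrm{d}u$ (where $\lambda$ is counting measure on $\bbZ^d$ in this discrete step), and that the hypothesis $f_0\in L^1(\mu)$ translates into $\bbE[\int_\cX Z(x)\lambda(\mathrm{d}x)]<+\infty$.

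The main technical obstacle is the continuous-parameter case $\cX=\bbR^d$, in which the Hopf decomposition is defined only through the discrete skeleton $(\phi_y)_{y\in\bbZ^d}$. The construction above then initially yields shifts $X_i\in\bbZ^d$, and upgrading to genuine continuous shifts in $\bbR^d$ requires a measurable selection inside $W_0$ that parametrizes continuous orbits of $(\phi_x)_{x\in\bbR^d}$, absorbing a fractional shift $u_i\in [0,1)^d$ into $X_i$ and moving the remainder into the new spectral function $Z_i$. This measure-theoretic bookkeeping, together with verifying that no information is lost under the change of variables, is where the bulk of the work lies. Once it is carried out, both directions combine to yield Theorem~\ref{theo:M3}.
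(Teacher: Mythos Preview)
The paper does not supply its own proof of Theorem~\ref{theo:M3}; it quotes the result from the literature, citing Wang and Stoev \cite[Theorem~6.4]{WS10} for the max-stable case $d=1$ and Roy \cite[Theorem~3.4]{R10} for the sum-stable case $d\geq 1$. So there is no ``paper's proof'' to compare against.

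Your outline is essentially the standard argument that appears in those references. For the direction M3 $\Rightarrow$ dissipative, recognizing the M3-representation as a flow representation with the translation flow on $\cX\times\Omega$ and then invoking the representation-independence of the C/D decomposition (stated in Section~\ref{subsec:def_CD}) is exactly right; note that to conclude $\mu(C)=0$ in the original representation from $\eta_C=0$ you also need the full-support assumption on $f_0$, which forces any invariant set on which $f_0$ vanishes to be null. For the converse, your re-coordinatization via the wandering set $W_0$ and the cocycle identity is the correct mechanism; the Jacobian computation does give the intensity (counting measure on $\bbZ^d$) $\times\,\mu|_{W_0}\times v^{-2}\mathrm{d}v$, and to match Definition~\ref{def:MMM} one normalizes $\mu|_{W_0}$ to a probability measure using the integrable density $w\mapsto\int_{\cX}f_x(w)\lambda(\mathrm{d}x)$ (which equals $\int_S f_0\,\mathrm{d}\mu<\infty$ after summing/integrating over $W_0$). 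The passage to $\cX=\bbR^d$ via the discrete skeleton is handled in \cite{R10} precisely along the lines you describe.
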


\subsection{Characterization using spectral functions}\label{subsec:3.2}

The following simple integral test on the spectral functions allows us to retrieve the conservative/dissipative decomposition; see Roy and Samorodnitsky \cite[Proposition]{RS08}, Roy \cite[Proposition 3.2]{R10} and Wang and Stoev \cite[Theorem 6.2]{WS10}.
\begin{theorem}\label{theo:CD} We have
\begin{itemize}
\item[(i)]$\int_{\cX}f_x(s)\lambda(\mathrm{d}x)=\infty$  $\mu(\mathrm{d}s)$--a.e. on $C$;
\item[(ii)]$\int_{\cX}f_x(s)\lambda(\mathrm{d}x)<\infty$ $\mu(\mathrm{d}s)$--a.e. on $D$.
\end{itemize}
\end{theorem}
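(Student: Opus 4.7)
The plan is to reduce the statement to Hopf's classical decomposition theorem for the non-singular flow $(\phi_x)_{x\in\mathbb{Z}^d}$ on $(S,\cB,\mu)$, so I would first dispatch the discrete case $\cX=\mathbb{Z}^d$ and then transfer the result to $\cX=\mathbb{R}^d$ via the skeleton flow. Using $f_x(s)=\omega_x(s)f_0(\phi_x(s))$, rewrite
\[
\int_{\cX} f_x(s)\lambda(\mathrm{d}x)=\sum_{x\in\mathbb{Z}^d}\omega_x(s) f_0(\phi_x(s)),
\]
so the dichotomy is exactly the statement that this series is $\mu$-a.e.\ infinite on $C$ and $\mu$-a.e.\ finite on $D$ for any $f_0\in L^1_+(\mu)$ whose zero-set contains no invariant set of positive measure.

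For (ii), the dissipative side, I would use the wandering-set representation $D=\bigsqcup_{x\in\mathbb{Z}^d}\phi_x(W_0)$ together with the Radon--Nikodym identity
\[
\int_S g(\phi_x(s))\omega_x(s)\mu(\mathrm{d}s)=\int_S g(s)\mu(\mathrm{d}s),\qquad g\geq 0,
\]
which is the defining property of $\omega_x$. Integrating the series over $D$ and swapping sum and integral by Fubini--Tonelli yields, by disjointness of the translates $\phi_x(W_0)$, a quantity bounded by $\int_S f_0\,\mathrm{d}\mu<\infty$. Hence the series is finite $\mu$-a.e.\ on $D$.

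For (i), the conservative side, I would argue by contradiction and assume the measurable set $A:=\{s\in C:\sum_x \omega_x(s) f_0(\phi_x(s))<\infty\}$ has positive measure. Note that $A$ is a.e.\ invariant under $(\phi_x)$ (up to factors of $\omega_x$ one has $s\in A\Leftrightarrow \phi_y(s)\in A$, from the cocycle relation for $\omega$ and reindexing of the sum). Then, for $s\in A$, for every $\eps>0$ the tail $\sum_{|x|>N}\omega_x(s)f_0(\phi_x(s))$ is smaller than $\eps$ for $N$ large, and a standard Hopf-style exhaustion lets one cut out a wandering subset of positive measure inside $C$, contradicting the very definition of the conservative part. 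The hypothesis that $\{f_0=0\}$ contains no invariant set of positive measure is used here to guarantee that $f_0$ does not vanish identically along orbits, so that the tail series actually has positive contributions to exploit.

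The extension to $\cX=\mathbb{R}^d$ is done by comparing the Lebesgue integral with the Riemann-type sum over the skeleton $\mathbb{Z}^d$: writing
\[
\int_{\mathbb{R}^d} f_x(s)\lambda(\mathrm{d}x)=\sum_{z\in\mathbb{Z}^d}\int_{[0,1)^d} \omega_z(s) f_y(\phi_z(s))\lambda(\mathrm{d}y)
\]
via the cocycle $f_{y+z}(s)=\omega_z(s)f_y(\phi_z(s))$, and applying the discrete Hopf dichotomy to the $L^1$ function $\bar f_0(s):=\int_{[0,1)^d}f_y(s)\lambda(\mathrm{d}y)$, which inherits the ``no invariant subset of $\{\bar f_0=0\}$'' property by stochastic continuity of $Y$. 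The main obstacle I foresee is the conservative part (i): the finiteness-to-wandering construction is the heart of Hopf's theorem and has to be adapted carefully to the $\mathbb{Z}^d$-action (rather than $\mathbb{Z}$-action) and to the weighted sum involving the Radon--Nikodym cocycle $\omega_x$, whereas part (ii) and the passage from $\mathbb{Z}^d$ to $\mathbb{R}^d$ are essentially bookkeeping.
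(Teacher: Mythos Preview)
The paper does not give a proof of this theorem; it is quoted from the literature with references to Roy--Samorodnitsky, Roy, and Wang--Stoev. Your outline is essentially the standard route taken in those sources: reduce to Hopf's characterization of $C$ and $D$ for the $\mathbb{Z}^d$-action, then pass to $\mathbb{R}^d$ via the skeleton.

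One concrete slip in part~(ii): integrating $\sum_{y} f_y$ over all of $D$ does \emph{not} give a finite quantity. Since $D$ is invariant, $\int_D f_y\,\mathrm{d}\mu=\int_D f_0\,\mathrm{d}\mu$ for every $y$, so the sum over $y$ diverges. The correct move is to integrate over the wandering generator $W_0$: then
\[
\int_{W_0}\sum_{y\in\mathbb{Z}^d} f_y(s)\,\mu(\mathrm{d}s)
=\sum_{y}\int_{\phi_y(W_0)} f_0\,\mathrm{d}\mu
=\int_D f_0\,\mathrm{d}\mu<\infty,
\]
using disjointness of the $\phi_y(W_0)$. Finiteness on $W_0$ then propagates to all of $D=\bigcup_x\phi_x(W_0)$ via the cocycle identity $\sum_y f_y(\phi_x(s))=\omega_x(s)^{-1}\sum_z f_z(s)$ and non-singularity.

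For part~(i) you correctly flag that the ``finiteness $\Rightarrow$ wandering set'' step is the substance of Hopf's theorem and requires genuine work for $\mathbb{Z}^d$-actions; your sketch (invariance of the finiteness set, then an exhaustion argument) is the right shape but is not a proof as written. The cited references carry this out. Your reduction from $\mathbb{R}^d$ to $\mathbb{Z}^d$ via $\bar f_0(s)=\int_{[0,1)^d}f_y(s)\,\lambda(\mathrm{d}y)$ is the right device; the verification that $\{\bar f_0=0\}$ contains no $\mathbb{Z}^d$-invariant set of positive measure does need an argument (Fubini on $[0,1)^d\times S$, not merely ``stochastic continuity of $Y$''), but it goes through.
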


Consider a stationary  max-stable process $\eta$ given by de Haan's representation~\eqref{eq:deHaan}.
In view of Theorem \ref{theo:CD}, we introduce the cones of functions
\begin{eqnarray}
\cF_C&=&\left\{f\in \cF_0;\ \int_{\cX}f(x)\lambda(\mathrm{d}x)=\infty \right\},\label{eq:CC}\\
\cF_D&=&\left\{f\in \cF_0;\ \int_{\cX}f(x)\lambda(\mathrm{d}x)<\infty  \right\}\label{eq:CD}.
\end{eqnarray}
These cones are clearly shift-invariant and, assuming that $Y$ is jointly measurable and separable, the events $\{Y\in \cF_C\}$ and $\{Y\in \cF_D\}$ are measurable. Using Lemma~\ref{lem:dec}, we define
\begin{equation}\label{eq:cons}
\eta_C(x)= \bigvee_{i\geq 1} U_iY_i(x)\ind_{\{Y_i\in\cF_C\}}\quad \mbox{and}\quad \eta_D(x)= \bigvee_{i\geq 1} U_iY_i(x)\ind_{\{Y_i\in\cF_D\}}.
\end{equation}
Using Theorem \ref{theo:CD} and Lemma \ref{lem:dec} one can easily prove   that we retrieve (in distribution) the conservative/dissipative decomposition \eqref{eq:flow_CD} based on the flow representation~\eqref{eq:flow}.

The main contribution of this section concerns the case when the max-stable process $\eta$ has locally bounded sample paths, which is usually the case in applications. Interestingly, one can then introduce another, more simple and  convenient, cone decomposition equivalent to \eqref{eq:cons}. Consider
\begin{eqnarray*}
\tilde\cF_C&=&\left\{f\in \cF_0;\ \limsup_{x\to\infty} f(x)>0 \right\},\\
\tilde\cF_D&=&\left\{f\in \cF_0;\ \lim_{x\to\infty}f(x)=0 \right\}.
\end{eqnarray*}
Note that since the process $Y$ is assumed to be separable, the events $\{Y\in\tilde\cF_C\}$ and $\{Y\in\tilde\cF_C\}$ are measurable.
\begin{proposition}\label{prop}
Let $\eta$ be a stationary max-stable process given by de Haan's representation \eqref{eq:deHaan} and assume that $\eta$ has locally bounded sample paths.
Then, modulo null sets,
\[
 \{Y\in\cF_C\}= \{Y\in\tilde\cF_C\}\quad \mbox{and}\quad  \{Y\in\cF_D\}= \{Y\in\tilde\cF_D\}.
\]
We deduce that the decomposition
\[
\tilde\eta_C(x)= \bigvee_{i\geq 1} U_iY_i(x)\ind_{\{Y_i\in\tilde\cF_C\}}\quad \mbox{and}\quad
\tilde\eta_D(x)= \bigvee_{i\geq 1} U_iY_i(x)\ind_{\{Y_i\in\tilde\cF_D\}}.
\]
is almost surely equal to the decomposition~\eqref{eq:cons}.
\end{proposition}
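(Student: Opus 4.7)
The plan is to prove the two cone equalities by showing that the associated decompositions $\eta_C\vee\eta_D$ and $\tilde\eta_C\vee\tilde\eta_D$ coincide pathwise, which by Lemma~\ref{lem:dec} reduces to showing the cones agree on the $Y_i$'s modulo null sets. The key tool is that the exponent measure $\mathfrak m$ of a max-stable process on $\cF_0$ is canonical, i.e., determined by the law of the process, so it can be computed from any representation; we will compute it from the de Haan representation and from the M3 representation guaranteed by Theorem~\ref{theo:M3} on the dissipative parts.

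For the inclusion $\{Y\in\cF_D\}\subset\{Y\in\tilde\cF_D\}$ mod null, note that $\eta_D$ is locally bounded (being dominated by $\eta$) and by Theorem~\ref{theo:M3} has an M3 representation $\eta_D(x)=\bigvee_j V_jZ_j(x-X_j)$ with $\bbE[\int Z\,\mathrm d\lambda]<+\infty$. Local boundedness of $\eta_D$ is equivalent to $\bbE[\int_\cX\sup_{x\in K}Z(x-y)\,\lambda(\mathrm dy)]<+\infty$ for every compact $K$, and I would deduce from this that $Z\to 0$ a.s.\ at infinity: if not, one could extract a sequence $y_n\to\infty$ with $Z(y_n)\geq\epsilon$ and $|y_{n+1}-y_n|>2\,\mathrm{diam}(K)$, making the translates $y_n+K$ disjoint and forcing the above integral to be a.s.\ infinite. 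The exponent measure $\mathfrak m_D$ of $\eta_D$ assigns mass $\lambda(\cX)\cdot\bbE[\sup Z\cdot\ind_{Z\in\tilde\cF_C}]=0$ to $\tilde\cF_C$ when computed in the M3 parametrization, while in the thinned de Haan parametrization the same mass equals $(\int_0^\infty u^{-2}\mathrm du)\cdot\bbP(Y\in\cF_D\cap\tilde\cF_C)$; equating these forces $\bbP(Y\in\cF_D\cap\tilde\cF_C)=0$.

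For the reverse inclusion $\{Y\in\tilde\cF_D\}\subset\{Y\in\cF_D\}$ mod null, I would show that $\tilde\eta_D$ itself admits an M3 representation and invoke Theorem~\ref{theo:M3} to conclude its spectral functions are a.s.\ integrable. Each contributing $Y_i\in\tilde\cF_D$ is locally bounded (since $U_iY_i\leq\eta$) and vanishes at infinity, so it attains a maximum $V_i$ at a measurably selected argmax $X_i^*\in\cX$; writing $Z_i(x):=Y_i(x+X_i^*)/V_i$ reparametrizes the atoms of $\mathfrak m_{\tilde D}$ as $(X_i^*,U_iV_i,Z_i)\in\cX\times(0,+\infty)\times\cF_1^{\max}$, where $\cF_1^{\max}$ denotes the space of functions with unit maximum attained at $0$. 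Stationarity of $\eta$ gives shift-invariance of $\mathfrak m_{\tilde D}$, which in the new coordinates is exactly translation invariance in $X_i^*$; combined with the $1$-Fr\'echet homogeneity $\mathfrak m_{\tilde D}(cA)=c^{-1}\mathfrak m_{\tilde D}(A)$ this forces the intensity to factorize as $\lambda(\mathrm dx^*)\otimes w^{-2}\mathrm dw\otimes\nu(\mathrm dz)$. After absorbing the total mass of $\nu$ into the amplitude to obtain a probability $\mu_Z$, this is a genuine M3 representation; the required $\bbE[\int Z\,\mathrm d\lambda]<+\infty$ follows by the same local-boundedness-to-integrability computation as in step one, applied now to $\tilde\eta_D$.

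The hard part will be the Palm-type argument of the second step. One must verify measurability of the argmax selection (using separability of $Y$), handle non-uniqueness of the maximum on a presumably null event, and justify the factorization of $\mathfrak m_{\tilde D}$ into a product of invariant pieces by combining $\sigma$-finiteness, shift invariance, and homogeneity. Establishing $\bbE[\int Z\,\mathrm d\lambda]<+\infty$ for the reconstructed shape, rather than the weaker $\int Z\,\mathrm d\lambda<+\infty$ a.s., is the other delicate point and requires that local boundedness of $\eta$ be translated through the reparametrization; without it Definition~\ref{def:MMM} would not apply and Theorem~\ref{theo:M3} could not be invoked to close the argument.
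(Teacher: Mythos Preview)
Your overall strategy coincides with the paper's: both inclusions are handled through M3 representations and Theorem~\ref{theo:M3}. For $\{Y\in\cF_D\}\subset\{Y\in\tilde\cF_D\}$ you pass to the M3 shape $Z$ of $\eta_D$ and use local boundedness to force $Z\to 0$ a.s., which is precisely the paper's argument in the continuous case (phrased there via the auxiliary cone $\cF_D'=\{f:\int\sup_{u\in K}f(\cdot+u)\,\mathrm d\lambda<\infty\}$); note that in the discrete case this inclusion is trivial because summability implies the terms vanish. For the reverse inclusion your argmax reparametrization of $\tilde\eta_D$ is exactly the paper's construction, with the factorization of the intensity deferred to \cite{KSdH09}.

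Two places where your write-up needs repair. First, the exponent-measure bookkeeping is inconsistent: the expression $\lambda(\cX)\cdot\bbE[\sup Z\cdot\ind_{Z\in\tilde\cF_C}]$ is $\mathfrak m_D(\{\sup f>1\}\cap\tilde\cF_C)$, whereas $(\int_0^\infty u^{-2}\mathrm du)\cdot\bbP(Y\in\cF_D\cap\tilde\cF_C)$ is $\mathfrak m_D(\tilde\cF_C)$, so you are not equating the same quantity. The clean formulation is via Lemma~\ref{lem:dec}: the $(\tilde\cF_C,\tilde\cF_D)$-decomposition of $\eta_D$ is representation-independent, the M3 side gives a trivial $\tilde\cF_C$-component because $Z\in\tilde\cF_D$ a.s., hence $\bbE[Y(x)\ind_{\{Y\in\cF_D\cap\tilde\cF_C\}}]=0$ for every $x$, and separability then yields $\bbP(Y\in\cF_D\cap\tilde\cF_C)=0$.

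Second, and more substantively, your claim ``locally bounded and vanishes at infinity, so it attains a maximum'' is false for general measurable functions on $\bbR^d$ (take $f(1/n)=1-1/n$, $f=0$ elsewhere). This is exactly why the paper separates the cases: the argmax construction is performed only over $\bbZ^d$, where attainment is automatic, and the continuous case is reduced to the discrete skeleton $(\tilde Y_D(x))_{x\in\bbZ^d}$, using that dissipativity of the skeleton flow implies dissipativity of the $\bbR^d$-flow. Your direct continuous-time argmax construction has a genuine gap here; it can be patched by centering at the lexicographic argmax over $\bbZ^d$, but as written the step does not go through.
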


\begin{proof}
We consider first the discrete setting $\mathcal{X}=\mathbb{Z}^d$.
The convergence of the series $\sum_{x\in\mathbb{Z}^d} f(x)$ implies the convergence $\lim_{x\to\infty} f(x)=0$
 so that the inclusion $\{Y\in \cF_D\}\subset \{Y\in \tilde\cF_D\}$ is trivial. We need only to
  prove the converse inclusion $\{Y\in \tilde\cF_D\}\subset \{Y\in \cF_D\}$.
   Then, the equality $\{Y\in\cF_D\}=\{Y\in \tilde\cF_D\}$ (modulo null sets) implies the
    equality of the complementary sets, i.e. $\{Y\in \cF_C\}=\{Y\in \tilde\cF_C\}.$

\vspace*{2mm}
\noindent
\textit{Proof of the inclusion $\{Y\in\tilde\cF_D\}\subset \{Y\in \cF_D\}$.}
Let $\tilde Y_D=Y\ind_{\{Y\in\tilde\cF_D\}}$ and $\tilde\eta_D=\vee_{i\geq 1}U_iY_i\ind_{\{Y_i\in\tilde\cF_D\}}$. We shall show that $\tilde\eta_D$ admits an M3-representation. By Theorem \ref{theo:M3}, this implies that $\tilde Y_D$ belongs a.s.\ to $\cF_D$ and hence $\{Y\in\tilde\cF_D\}\subset \{Y\in \cF_D\}$ modulo null sets.
For the sake of notational convenience,  we assume that $Y\in\tilde\cF_D$ a.s.\ so that $\tilde Y_D=Y$ and $\tilde \eta_D=\eta$. We prove that $\eta$ has an M3-representation with a strategy similar to the proof of Theorem 14 in Kabluchko {\it et al.}\ \cite{KSdH09}. We sketch only the main lines.
We  introduce the random variables
\begin{equation}\label{eq:techMMM2}
 X_i=\mathop{\mathrm{argmax}}_{x\in\cX}Y_i(x),\quad Z_i(\cdot)=\frac{Y_i(X_i+\cdot)}{\max_{x\in\cX} Y_i (x)},\quad V_i=U_i\max_{x\in\cX} Y_i (x).
\end{equation}
If the $\mathop{\mathrm{argmax}}$ is not unique, we use the lexicographically smallest value. Clearly, we have $U_iY_i(x)=V_iZ_i(x-X_i)$ for all $x\in\cX$ so that
\[
\eta(x)=\bigvee_{i\geq 1}V_iZ_i(x-X_i).
\]
It remains to check that $(X_i,V_i,Z_i)_{i\geq 1}$ has the  properties required in Definition~\ref{def:MMM}, i.e.\ is a Poisson point process on $\mathcal X \times (0,\infty)\times \mathcal F_0$ with  intensity measure
$\lambda(\mathrm{d}x) \times u^{-2}\mathrm{d}u \times Q(\mathrm{d}f)$, where $Q$ is a probability measure on $\cF_0$.
 Clearly, $(X_i,V_i, Z_i)_{i\geq 1}$ is a Poisson point process as the image of the original point process $(U_i,Y_i)_{i\geq 1}$. Its intensity  is the  image of the  intensity of the original point process.
With a straightforward transposition of the arguments of \cite[Theorem 14]{KSdH09}, one can check that it has the required form.

\vspace*{2mm}
%\noindent
We now turn to the case $\mathcal{X}=\mathbb{R}^d$.
The convergence of the integral $\int_{\cX} f(x) \lambda(\mathrm{d}x)$ does not imply the convergence $\lim_{x\to\infty} f(x)=0$. But it is easy to prove that for $K=[-1/2,1/2]^d$, the convergence  of the integral $\int_{\cX} \sup_{u\in K}f(x+u) \lambda(\mathrm{d}x)$ implies the convergence $\lim_{x\to\infty} f(x)=0$.
We introduce the cone
\[
\cF_D'= \left\{f\in\cF_0;\ \int_{\cX} \sup_{u\in K}f(x+u) \lambda(\mathrm{d}x) <\infty\right\}.
\]
The inclusions of cones
$\cF_D'\subset \cF_D$ and $\cF_D'\subset \tilde\cF_D$
imply the trivial inclusions of events
\[
\{Y\in \cF_D'\}\subset \{Y\in\cF_D\}\quad \mbox{and}\quad \{Y\in\cF_D'\}\subset \{Y\in\tilde\cF_D\}.
\]
We shall prove below that, modulo null sets,
\[
\{Y\in\cF_D\}\subset \{Y\in\cF_D'\} \quad \mbox{and}\quad \{Y\in\tilde\cF_D\}\subset \{Y\in \cF_D\}
\]
whence we deduce the equalities, modulo null sets,
\[
\{Y\in\cF_D\}=\{Y\in\cF_D'\}=\{Y\in\tilde\cF_D\},
\]
proving the proposition.

\vspace*{2mm}
\noindent
\textit{Proof of the inclusion $\{Y\in\cF_D\} \subset \{Y\in \cF_D'\}$.}
Let $Y_D=Y\ind_{\{Y\in\cF_D\}}$ and $\eta_D=\vee_{i\geq 1}U_iY_i\ind_{\{Y_i\in\cF_D\}}$ be the dissipative part of $\eta$. Theorem~\ref{theo:M3} implies that $\eta_D$ has an M3-representation of the form
\[
\eta_D(x)\stackrel{f.d.d.}=\bigvee_{i\geq 1}V_i Z_{D,i}(x-X_i),\quad x\in\cX.
\]
The fact that $\eta$ is locally bounded implies that $\eta_D$ is a.s.\ finite on $K$ and
\begin{equation}\label{eq:theta_K}
\bbP\left[\sup_{x\in K}\eta_D(x)\leq z\right]= \exp\left(-\frac{\theta_D(K)}{z}\right)
\end{equation}
with
\[
\theta_D(K)=\bbE\left[\int_{\cX} \sup_{x\in K} Z_D(x-y)\lambda(\mathrm{d}y)\right]<\infty.
\]
We deduce that $\int_{\cX} \sup_{x\in K} Z_D(x-y)\lambda(\mathrm{d}y)$ is a.s.\ finite and hence, $Z_D$ belongs a.s.\ to the cone $\cF_D'$. This implies that $Y \ind_{\{Y\in \cF_D\}}\in\mathcal{\cF_D'}$ almost surely, whence $\{Y\in\cF_D\}\subset\{Y\in\cF_D'\}$ modulo null sets.

\vspace*{2mm}
\noindent
\textit{Proof of the inclusion $\{Y\in\tilde\cF_D\}\subset \{Y\in \cF_D\}$.}
With the same notation as in the dicrete case, we show that $\tilde\eta_D$ is generated by a dissipative flow and hence has an M3-representation. By Theorem \ref{theo:M3}, this implies that $\tilde Y_D$ belongs a.s.\ to $\cF_D$ and proves the inclusion $\{Y\in\tilde\cF_D\}\subset \{Y\in \cF_D\}$. Note that the discrete skeleton $\tilde Y_D^{skel}=(\tilde Y_D(x))_{x\in\bbZ^d}$ satisfies $\lim_{x\to\infty} \tilde Y_D^{skel}=0$. We deduce $\tilde Y_D^{skel}\in  \tilde \cF_D$ a.s.\ which is equivalent to
$\tilde Y_D^{skel}\in \cF_D$ a.s.\ (see the proof above in the discrete case). Hence $(\tilde \eta_D(x))_{x\in\bbZ^d}$ is generated by a dissipative flow and this implies that $(\tilde \eta_D(x))_{x\in\bbR^d}$ is generated by a dissipative flow (see \cite[Section 2]{R10}).
\end{proof}

\begin{proof}[Proof of Theorem~\ref{theo:MMM}]
The equivalence of (a), (b), (c) in Theorem~\ref{theo:MMM} was known before and holds even without the assumption of local boundedness (see Section~\ref{subsec:def_CD} and the reference therein). The  equivalence of (c) and (d) holds under the assumption of local boundedness and is a straightforward consequence of Proposition~\ref{prop}.
\end{proof}

\begin{example}\label{eq:loc_bound}{\rm
The assumption that the sample paths of $\eta$ should be locally bounded cannot be removed from Proposition~\ref{prop}. To see this, consider the following (deterministic) process $Z$:
$$
Z(x) = \sum_{n=1}^{\infty} f(n^2(x-n)), \quad x\in\bbR,
$$
where $f(t)=(1-t^2)\ind_{|t|\leq 1}$. % and $C$ is a constant such that $\int_{\bbR} Z(x)\mathrm{d}x=1$.
The process $Z$ is non-zero only on the intervals of the form $(n-\frac 1{n^2}, n+\frac 1{n^2})$, $n\in\mathbb{N}$.  Its sample paths are continuous and bounded on $\mathbb R$.
The M3-process $\eta$ corresponding to $Z$ is well-defined because $\int_{\mathbb{R}}Z(x) {\rm{d}} x <\infty$. On the other hand, $\bbP[Z\in \tilde \cF_D] = 0$ and hence, $\bbP[Y\in \tilde \cF_D] = 0$,
where $Y$ is the spectral function of $\eta$ from the de Haan representation~\eqref{eq:deHaan}.
It is easy to check that
$$
\bbP\left[\sup_{x\in [0,1]}\eta(x)\leq z\right]= \exp\left(-\frac{\theta_{[0,1]}}{z}\right),\quad z>0,
$$
with
$$
\theta_{[0,1]}= \int_{\mathbb{R}} \left(\sup_{x\in [0,1]} Z(x-y)\right) \mathrm{d}y = +\infty,
$$
whence $\sup_{x\in [0,1]}\eta(x)=+\infty$ a.s.\ and the sample paths of $\eta$ are not locally bounded.
}
\end{example}

\section{Positive/null  decomposition}\label{app:PN}
\subsection{Definition of the positive/null decomposition}
We start by defining the Neveu decomposition of the non-singular flow $(\phi_x)_{x\in\cX}$; see, e.g.,\ Krengel \cite[Theorem 3.9]{K85}, Samorodnitsky~\cite{S05} or Wang {\it et al.}\ \cite[Theorem 2.4]{WRS13}.
\begin{definition}
Consider a measure space $(S,\cB,\mu)$ and a measurable non-singular flow $(\phi_x)_{x\in\cX}$ on $S$. A measurable set $W\subset S$ is said to be
{\it weakly wandering} with respect to  $(\phi_x)_{x\in\cX}$ if there exists a sequence  $\{x_n\}_{n\in\bbN}\subset\cX$ such that $\phi_{x_n}^{-1}(W)\cap\phi_{x_m}^{-1}(W)=\varnothing$ for all $n\neq m$.
 \end{definition}
The Neveu decomposition theorem states that there exists a  partition of $S$ into two disjoint measurable sets $S=P\cup N$, $P\cap N=\varnothing$, such that
\begin{itemize}
\item[(i)] $P$ and $N$ are $(\phi_x)_{x\in\cX}$--invariant for all $x\in\cX$,
\item[(ii)] $P$ has no weakly wandering set of positive measure,
\item[(iii)] $N$ is a union of countably many weakly wandering sets.
\end{itemize}
This decomposition is unique mod $\mu$ and is called the \textit{Neveu decomposition} of $S$ associated with $(\phi_x)_{x\in\cX}$; $P$ and $N$ are called the \textit{positive} and \textit{null} components
with respect to  $(\phi_x)_{x\in\cX}$, respectively. It can be shown that $P$ is the largest subset of $S$ supporting a finite measure which is equivalent to $\mu$ and invariant
under the flow $(\phi_x)_{x\in\cX}$ (\cite[Lemma 2.2]{WRS13}). Hence, there exists a finite measure which is equivalent to $\mu$ and invariant under the flow if and only if
$N=\varnothing$ mod $\mu$.

The corresponding positive/null decomposition of the stationary max-stable process $\eta$ represented as in~\eqref{eq:flow}, \eqref{eq:eint} is given by $\eta=\eta_P\vee\eta_N$ with
\begin{equation}\label{eq:posnul}
\eta_P(x)=\int^{\rm {e}}_{P} f_x(s)M(\mathrm{d}s)\quad \mbox{and}\quad
\eta_N(x)=\int^{\rm {e}}_{N} f_x(s)M(\mathrm{d}s) ,\quad x\in \mathcal{X}.\end{equation}
The positive and null components $\eta_P$ and $\eta_N$ are independent, stationary max-stable processes,  and their distribution does not depend on the particular choice of the representation \eqref{eq:flow}.

%The importance of the positive/null decomposition comes from the following theorem (see \cite[Theorem 8]{K09} and \cite[Theorem 5.3]{WRS13}).
%\begin{theorem}
%Let $\eta$ be a stationary max-stable process given by the non-singular flow representation \eqref{eq:flow}. Then, $\eta$ is ergodic if and only if %$\eta$ is generated by a null flow (i.e.\ $\eta_P=0$).
%\end{theorem}

\subsection{Characterization using spectral functions}
An integral test on the spectral functions which allows to retrieve the positive/null decomposition is known in the one-dimensional case (see Samorodnitsky \cite{S05} or Wang and Stoev \cite[Theorem 5.3]{WS10}).
\begin{theorem}\label{theo:null_positive_old}
Consider the case $d=1$ and introduce the class $\cW$ of positive weight functions $w:\mathcal{X}\to (0,+\infty)$ such that
$\int_{\cX}w(x)\lambda(\mathrm{d}x)<\infty$ and  $w(x)$ and $w(-x)$ are non-decreasing on $\cX\cap [0,+\infty)$.  Then we have
\begin{enumerate}
 \item[(i)] For all $w\in\mathcal{W}$, $\int_{\cX}f_x(s)w(x)\lambda(\mathrm{d}x)=\infty$  $\mu(\mathrm{d}s)$--a.e. on $P$;
\item[(ii)] For some $w\in\mathcal{W}$, $\int_{\cX}f_x(s)w(x)\lambda(\mathrm{d}x)<\infty$  $\mu(\mathrm{d}s)$--a.e. on $N$.
\end{enumerate}
\end{theorem}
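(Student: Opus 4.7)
I would prove (i) and (ii) separately, each by invoking the defining property of the corresponding Neveu component: on $P$, the existence of a finite invariant measure $\nu \sim \mu|_P$; on $N$, the decomposition $N = \bigsqcup_k W_k$ into countably many weakly wandering sets. Heuristically, on $P$ the flow orbits recur densely enough to force the weighted integrals of $f_x(s)$ to diverge against every $w \in \cW$, whereas on $N$ the weak-wandering escape allows one to design a single $w \in \cW$ under which the integral is finite.

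For (ii), I would start from the weakly wandering witness sequences $(x_n^{(k)})_n \subset \cX$ for each $W_k$; the disjointness of $\{\phi_{x_n^{(k)}}^{-1}(W_k)\}_n$ gives $\sum_n \ind_{W_k}(\phi_{x_n^{(k)}}(s)) \leq 1$ $\mu$-a.e. Together with the cocycle identity and $f_0 \in L^1(\mu)$, this produces a candidate weight $w_k$ concentrated near $\{x_n^{(k)}\}$ with $\int_{\cX} f_x(s) w_k(x) \lambda(\mathrm{d}x) < \infty$ $\mu$-a.e. on $W_k$. I would then combine these into a single $w = \sum_k c_k \tilde w_k \in \cW$, where each $\tilde w_k$ is a symmetric unimodal rearrangement of $w_k$ and $c_k \downarrow 0$ is chosen fast enough to keep $w$ bilaterally monotone and integrable while preserving the pointwise bounds on each $W_k$.

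For (i), I would factor $f_x(s) = h(s)\, g(\phi_x s)$ with $h = \mathrm{d}\nu/\mathrm{d}\mu$ and $g = f_0/h \in L^1(\nu)$, using the cocycle identity coming from the $\phi$-invariance of $\nu$. The hypothesis that $\{f_0 = 0\}$ contains no invariant set of positive measure translates to $\bbE_\nu[g \mid \cI] > 0$ $\nu$-a.e., where $\cI$ denotes the invariant $\sigma$-algebra, so Birkhoff's pointwise ergodic theorem on $(P,\nu)$ yields the growth $\int_{B_r} g(\phi_x s)\, \lambda(\mathrm{d}x) \sim \lambda(B_r)\, \bbE_\nu[g \mid \cI](s)$ as $r \to \infty$. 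The target divergence $\int_{\cX} f_x(s) w(x) \lambda(\mathrm{d}x) = +\infty$ would then be extracted from this growth by a Hopf-ratio-type argument adapted to the monotonicity of $w$.

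The hard part will be (i): a naive Fubini under $\nu$ or a naive Abel summation-by-parts against $w$ both seem to yield a finite answer comparable to $\|g\|_{L^1(\nu)} \int_{\cX} w(x) \lambda(\mathrm{d}x) < \infty$, so the pointwise divergence must be extracted from a subtler phenomenon in which the Jacobian $\omega_x$ and the invariant density $h$ conspire so that $\mu$-pointwise quantities blow up while $\nu$-averaged ones remain bounded. The full force of $\cW$ --- monotonicity together with integrability, not integrability alone --- is essential, and the restriction $d=1$ is used because the bilateral monotonicity in the definition of $\cW$ relies on the linear ordering of $\cX$. In (ii), the remaining subtlety is combinatorial: packaging countably many weakly wandering components into a single $w \in \cW$ via rearrangement requires careful bookkeeping so that no component's bound is destroyed.
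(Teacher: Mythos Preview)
The paper does not prove this theorem; it is quoted as a known result from Samorodnitsky~\cite{S05} and Wang and Stoev~\cite[Theorem~5.3]{WS10}, and the paper's own contribution in this section is the alternative criterion given immediately afterwards in Theorem~\ref{theo:PN}. There is therefore no in-paper proof to compare your proposal against.

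On the sketch itself: your strategy for~(i)---pass to the finite invariant measure on~$P$, factor $f_x(s)$ via the cocycle identity, and invoke Birkhoff to obtain linear growth of $r\mapsto\int_{B_r}f_x(s)\,\lambda(\mathrm{d}x)$---is exactly the machinery the paper uses to prove Theorem~\ref{theo:PN}(i). But the obstacle you flag at the end is genuine, not merely technical. Linear growth of the unweighted partial integrals does \emph{not} force $\int_{\cX}f_x(s)\,w(x)\,\lambda(\mathrm{d}x)=\infty$ for an \emph{integrable}~$w$: for any measure-preserving flow on a probability space with $f_0\equiv 1$ one has $f_x\equiv 1$, the flow is purely positive, yet the weighted integral equals $\int_{\cX}w\,\mathrm{d}\lambda<\infty$. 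In fact the class~$\cW$ as printed here is empty---a strictly positive function that is non-decreasing on $[0,\infty)$ cannot be integrable---so the statement evidently carries a transcription error relative to the cited sources. You should consult those references for the intended definition of~$\cW$ before investing further effort; no ``Hopf-ratio-type'' manipulation will rescue~(i) under the hypotheses as written, and your difficulty is a symptom of the misstatement rather than of a missing idea on your part.
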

The next theorem is a new integral test characterizing the positive/null decomposition.  This test is simpler than Theorem~\ref{theo:null_positive_old} and is valid for all $d\geq 1$. Recall that we write $B_r=[-r,r]^d\cap \mathcal{X}$ for $r>0$.  In the next theorem and its corollary we do not require the sample paths of $\eta$ to be locally bounded.
\begin{theorem}\label{theo:PN}
Let $\eta$ be a stationary, stochastically continuous max-stable process given by the non-singular flow representation~\eqref{eq:flow}. We have
\begin{itemize}
\item[(i)]$\lim_{r\to\infty} \frac{1}{\lambda(B_r)}\int_{B_r}f_x(s)\lambda(\mathrm{d}x)$ exists and is positive  $\mu(\mathrm{d}s)$--a.e.\ on $P$;
\item[(ii)]$\liminf_{r\to\infty} \frac{1}{\lambda(B_r)}\int_{B_r}f_x(s)\lambda(\mathrm{d}x)=0$ $\mu(\mathrm{d}s)$--a.e.\ on $N$.
\end{itemize}
\end{theorem}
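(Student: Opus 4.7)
My plan is to treat (i) and (ii) separately, exploiting in each case a ``change of measure to an invariant setting'' coming from the Neveu decomposition, and then invoking an appropriate multi-parameter pointwise ergodic theorem.

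\textit{Part (i): the positive part.} On $P$, the Neveu theorem (in the form cited in~\cite{WRS13}, Lemma~2.2) guarantees a finite measure $\nu$ on $P$, equivalent to $\mu|_P$ and invariant under $(\phi_x)_{x\in\cX}$. Let $h=d\nu/d\mu>0$ on $P$. The invariance of $\nu$ together with \eqref{eq:RN} yields the cocycle identity $h(s)=\omega_x(s)\,h(\phi_x(s))$ $\mu$-a.e.\ on $P$, for every $x\in\cX$, whence
$$f_x(s) \;=\; \omega_x(s)f_0(\phi_x(s)) \;=\; h(s)\,g(\phi_x(s)), \qquad g:=f_0/h.$$
Since $\int_{P} g\,d\nu=\int_{P} f_0\,d\mu<\infty$, we have $g\in L^1(\nu)$. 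Applying the multi-parameter Birkhoff--Wiener pointwise ergodic theorem to the measure-preserving $\bbZ^d$- or $\bbR^d$-action on the finite measure space $(P,\nu)$ gives
$$\frac{1}{\lambda(B_r)}\int_{B_r} g(\phi_x(s))\,\lambda(\mathrm{d}x)\xrightarrow[r\to\infty]{}\bbE_\nu[g\mid\cI_P](s) \quad \nu\text{-a.e.\ on }P,$$
where $\cI_P$ is the flow-invariant $\sigma$-algebra on $P$; multiplying by $h(s)$ gives the asserted convergence. Positivity follows from the standing hypothesis that $\{f_0=0\}$ contains no flow-invariant set of positive $\mu$-measure: if $\bbE_\nu[g\mid\cI_P]$ vanished on an $\cI_P$-measurable set $A\subset P$ with $\nu(A)>0$, then $g=0$ $\nu$-a.e.\ on $A$, hence $f_0=0$ $\mu$-a.e.\ on $A$, a contradiction.

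\textit{Part (ii): the null part.} Decompose $N=(N\cap D)\cup(N\cap C)$ via the Hopf decomposition of Section~\ref{app:CD}. On $N\cap D$, Theorem~\ref{theo:CD} gives $\int_{\cX}f_x(s)\lambda(\mathrm{d}x)<\infty$ $\mu$-a.e., which forces the Cesàro averages $\lambda(B_r)^{-1}\int_{B_r}f_x(s)\lambda(\mathrm{d}x)$ to converge to $0$. On $N\cap C$ the flow is conservative and null. Fix a strictly positive $g_0\in L^1(\mu)$ and apply the Hurewicz multi-parameter ratio ergodic theorem to get
$$\frac{\int_{B_r}f_x(s)\,\lambda(\mathrm{d}x)}{\int_{B_r}\omega_x(s)g_0(\phi_x(s))\,\lambda(\mathrm{d}x)}\xrightarrow[r\to\infty]{}\frac{\int f_0\,d\mu}{\int g_0\,d\mu} \quad \mu\text{-a.e.\ on }C.$$
The defining characterization of null type says that the denominator Hopf sums are $o(\lambda(B_r))$ $\mu$-a.e.\ on $N\cap C$, for otherwise extracting a limit along a subsequence (by a Koopman--type construction, cf.~\cite[Thm.~2.4]{WRS13}) would yield a non-trivial finite $(\phi_x)$-invariant measure equivalent to $\mu$ on an invariant subset of $N\cap C$, contradicting its null type. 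Consequently $\liminf_{r\to\infty}\lambda(B_r)^{-1}\int_{B_r}f_x(s)\lambda(\mathrm{d}x)=0$ $\mu$-a.e.\ on $N\cap C$, which combined with the dissipative case proves (ii).

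\textit{Main obstacle.} The principal technical difficulty lies in the continuous-parameter case $\cX=\bbR^d$: the Birkhoff--Wiener theorem and the Hurewicz ratio theorem in their pointwise forms require some care for $\bbR^d$-flows and even for $\bbZ^d$-actions. The standard workaround, following~\cite{R10}, is to pass to the $\bbZ^d$-skeleton of the flow (which preserves the P/N decomposition), establish both convergence statements for the skeleton, and then extend to the continuous parameter by bounding the continuous Cesàro averages by their discrete counterparts over translates of the unit cube $[0,1]^d$, using the $L^1$-continuity of $Y$ at $0$. A secondary subtlety in Part~(i) is to check that the invariant $\sigma$-algebra $\cI_P$ for the $\nu$-preserving action and for the $\mu$-non-singular action coincide modulo null sets, which follows from the equivalence $\nu\sim\mu$ on $P$.
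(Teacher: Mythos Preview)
Your Part~(i) coincides with the paper's argument: pass to the finite invariant equivalent measure on $P$, rewrite $f_x(s)$ as a positive factor times $g\circ\phi_x$ with $g\in L^1$, apply the multiparameter Birkhoff theorem (\cite[Theorem~2.8]{WRS13}), and rule out a vanishing conditional expectation via the full-support hypothesis on $f_0$.

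For Part~(ii) the paper takes a shorter and different route, and your version has a gap. The paper does not split $N$ into its dissipative and conservative pieces and never invokes a ratio ergodic theorem. It fixes any probability $\mu^*\sim\mu$, sets $f_x^*=f_x\cdot\mathrm{d}\mu/\mathrm{d}\mu^*$, and applies Krengel's \emph{stochastic} ergodic theorem for non-singular actions (\cite[Theorem~2.7]{WRS13}): the averages $\lambda(B_r)^{-1}\int_{B_r} f_x^*\,\lambda(\mathrm{d}x)$ converge \emph{in $\mu^*$-probability} to some $F\in L^1(\mu^*)$ satisfying $\omega_x^*\,F\circ\phi_x=F$, so that $F\,\mathrm{d}\mu^*$ is a finite flow-invariant measure absolutely continuous with respect to $\mu$, hence $F=0$ on $N$. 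Extracting an a.s.-convergent subsequence then gives $\liminf=0$.

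The gap in your argument is the assertion that ``the denominator Hopf sums are $o(\lambda(B_r))$ $\mu$-a.e.\ on $N\cap C$''. This is exactly the statement of Part~(ii) with $g_0$ in place of $f_0$, and the appeal to a ``Koopman-type construction, cf.~\cite[Thm.~2.4]{WRS13}'' is not a proof (that reference is the Neveu decomposition itself, not an ergodic theorem). Making this step rigorous requires precisely Krengel's stochastic ergodic theorem --- and once you have it for $g_0$, you may as well apply it directly to $f_0$ and drop both the Hurewicz step and the $D/C$ split. Two further issues with the detour: the Hurewicz ratio limit is in general a ratio of conditional expectations, not $\int f_0\,\mathrm{d}\mu / \int g_0\,\mathrm{d}\mu$, unless the action is ergodic; and a pointwise multi-parameter Chacon--Ornstein theorem over cubes for non-singular $\bbZ^d$-actions is not a standard off-the-shelf result, and you have not cited one that applies.
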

\begin{proof}%[Proof of Theorem~\ref{theo:PN}]
We consider the positive case and the null case separately.

\vspace*{2mm}
\noindent
\textit{Case 1.}
Assume first that $\eta$ is generated by a positive flow. Then, there is a probability measure $\mu^*$ on $(S, \mathcal{B})$ which is equivalent to $\mu$ and which is invariant under the flow. Note that any property holds $\mu$--a.e.\ if and only if it holds $\mu^*$--a.e. We denote by $D(s) = \frac{\rm{d}\mu}{\rm{d}\mu*}(s)\in (0,\infty)$ the Radon--Nikodym derivative and observe that for every $x\in\mathcal{X}$, the function $f_x^*(s) := f_x(s)D(s)$ satisfies
% form another flow representation of $\eta$ on $(S, \mathcal{B}, \mu^*)$. We have
\begin{equation}\label{eq:f_x_star_invar}
f_x^*(s)=f_0^*(\phi_x(s)) \quad \text{for $\lambda \times \mu$--a.e. $(x,s)\in \mathcal{X}\times S$}.
\end{equation}
Indeed, by definition of $f_x^*$ and $\omega_x$, we have
$$
f_x^*(s)= D(s) f_x(s) = D(s) \omega_x(s) f_0(\phi_x(s)) = \frac{D(s) \omega_x(s)}{D(\phi_x(s))}   f_0^*(\phi_x(s)).
$$
However, recalling the definition \eqref{eq:RN} of $\omega_x(s)$ and that $D(s) = \frac{\rm{d}\mu}{\rm{d}\mu*}(s)\in (0,\infty)$, we obtain
$$
\frac{D(s) \omega_x(s)}{D(\phi_x(s))} = \frac{{\rm{d}}\mu}{{\rm{d}}\mu^*}(s)  \frac{{\rm{d}}(\mu\circ \phi_x)}{{\rm{d}}\mu}(s) \frac{{\rm{d}}(\mu^*\circ \phi_x)}{{\rm{d}}(\mu\circ \phi_x)}(s) = \frac{{\rm{d}}(\mu^*\circ \phi_x)}{{\rm{d}}\mu^*}(s)=1
$$
$\mu$--a.e.\ for every $x\in\mathcal{X}$ because the measure $\mu^*$ is invariant. This yields~\eqref{eq:f_x_star_invar}.
By the multiparameter Birkhoff Theorem (see~\cite[Theorem 2.8]{WRS13}), we have
\begin{equation}\label{eq:birkhoff}
\lim_{r\to\infty} \frac{1}{\lambda(B_r)}\int_{B_r}f_x^*(s)\lambda(\mathrm{d}x)=\bbE[f_0^*|\cI] \quad \text{$\mu^*$--a.e.},
\end{equation}
where $\cI$ is the $\sigma$-algebra of $(\phi_x)_{x\in\cX}$--invariant measurable sets and $\bbE$ denotes the expectation w.r.t.\ $\mu^*$. We prove that the conditional expectation on the right-hand side is a.e.\ strictly positive.  The set $B=\{\bbE[f_0^*|\cI]=0\}$ is measurable and $(\phi_x)_{x\in\cX}$--invariant. Moreover, $f_0^*$ (and hence, $f_0$) vanishes a.e.\ on $B$ since $f_0^*$ is non-negative. This implies that $\mu(B)=0$ by the second condition in the definition of the flow representation~\eqref{eq:flow}. Thus, $\bbE[f_0^*|\cI]>0$ a.e. It follows from~\eqref{eq:birkhoff} and the above considerations  that
\begin{equation}\label{eq:birkhoff1}
\lim_{r\to\infty} \frac{1}{\lambda(B_r)}\int_{B_r}f_x(s)\lambda(\mathrm{d}x)=\frac {\bbE[f_0^*|\cI]} {D(s)} >0 \quad \text{$\mu$--a.e.},
\end{equation}
which proves part (i) of the theorem.
%Hence, the function $x\mapsto f_x(s)$  belongs to the cone $\cF_P$ defined by \eqref{eq:FP} almost surely.
%This must also be true for any equivalent representation \eqref{eq:flow} and the result follows in the positive case.

%Then, with a possible change of representation,
%one can assume that $\mu$ is a probability measure invariant under the flow. This implies $\omega_x(s)\equiv 1$ and $f_x(s)=f_0(\phi_x(s))$.
%By the multiparameter Birkhoff Theorem (\cite[Theorem 2.8]{WRS13}), we have
%\[
%\lim_{r\to\infty} \frac{1}{\lambda(B_r)}\int_{B_r}f_x(s)\lambda(\mathrm{d}x)=\bbE_\mu[f_0\mid\cI] \quad \mu-\mbox{almost surely},
%\]
%where $\cI$ is the $\sigma$-algebra of $(\phi_x)_{x\in\cX}$-invariant measurable sets. The set $B=\{\bbE_\mu[f_0\mid\cI]=0\}$ is measurable and %$(\phi_x)_{x\in\cX}$-invariant and $f_0$ vanishes on $B$ (recall that $f_0$ is non-negative). This implies that $\mu(B)=0$ (see the second condition %in representation \eqref{eq:flow}). In other terms, the function $x\mapsto f_x(s)$  belongs to the cone $\cF_P$ defined by \eqref{eq:FP} almost %surely. This must also be true for any equivalent representation \eqref{eq:flow} and the result follows in the positive case.

\vspace*{2mm}
\noindent
\textit{Case 2.}
We consider now the case when $\eta$ is generated by a null flow. Let $\mu^*$ be any probability measure on $(S, \mathcal{B})$ which is equivalent to $\mu$. Write $D(s) = \frac{\rm{d}\mu}{\rm{d}\mu*}(s)\in (0,\infty)$ for the Radon--Nikodym derivative. The functions $f_x^*(s) := f_x(s)D(s)$ satisfy
$$
f_x^*(s) = \omega^*_x(s) f_0^*(\phi_x(s)), \quad \text{where } \omega^*_x(s) := \frac{{\rm{d}}(\mu^*\circ \phi_x)}{\rm{d}\mu^*}(s),
$$
by the same considerations as in the positive case.
%One can assume without loss of generality that $\mu$ is a probability measure.
Birkhoff's ergodic theorem is valid for measure preserving flows only, but we can use Krengel's stochastic ergodic theorem for non-singular actions (see~\cite[Theorem 2.7]{WRS13}) which yields
\[
\frac{1}{\lambda(B_r)}\int_{B_r}f_x^*(\cdot)\lambda(\mathrm{d}x)\stackrel{\mu^*}{\rightarrow} F(\cdot)\quad \mbox{as } r\to\infty
\]
where $\stackrel{\mu^*}{\rightarrow}$ denotes convergence in $\mu^*$-probability and the limit function $F\in L^1(S,\mu^*)$ is such that for all $x\in\cX$,
\[
\omega_x^*(s) F(\phi_x(s))=F(s) \quad \text{a.e.}
\]
This relation implies that the measure $F(s)\mu^*(\mathrm{d}s)$ is a finite measure which is absolutely continuous with respect to $\mu$ and invariant under the flow $(\phi_x)_{x\in\cX}$. Since the flow has no positive component, this means that $F=0$ a.e. We deduce that $\frac{1}{\lambda(B_r)}\int_{B_r}f_x^*(\cdot)\lambda(\mathrm{d}x)$ converges in $\mu^*$-probability to $0$. Convergence in probability implies a.s.\ convergence along a subsequence, whence
\[
\liminf_{r\to\infty}\frac{1}{\lambda(B_{r})}\int_{B_{r}}f_x^*(s)\lambda(\mathrm{d}x)=0 \quad \text{$\mu^*$--a.e.}
\]
Since $f_x$ differs from $f_x^*$ by a positive factor and the measures $\mu$ and $\mu^*$ are equivalent, we have
\[
\liminf_{r\to\infty}\frac{1}{\lambda(B_{r})}\int_{B_{r}}f_x(s)\lambda(\mathrm{d}x)=0 \quad \text{$\mu$--a.e.},
\]
which proves part (ii) of the theorem.
%Hence, the function $x\mapsto f_x(s)$  belongs to the cone $\cF_N$ defined by \eqref{eq:FN} almost surely.
\end{proof}

As a consequence of Theorem~\ref{theo:PN}, we can provide a new construction for the positive/null decomposition \eqref{eq:posnul}.
Consider the following shift-invariant cones
\begin{eqnarray}
\cF_P&=&\left\{f\in \cF_0;\ \lim_{r\to\infty}\frac{1}{\lambda(B_r)}\int_{B_r} f(x)\lambda(\mathrm{d}x)>0\right\},\label{eq:FP}\\
\cF_N&=&\left\{f\in \cF_0;\ \liminf_{r\to\infty}\frac{1}{\lambda(B_r)}\int_{B_r} f(x)\lambda(\mathrm{d}x)=0\right\}.\label{eq:FN}
\end{eqnarray}
In the definition of $\cF_P$ the limit is required to exist and to be positive.
\begin{corollary}\label{cor}
Let $\eta$ be a stationary, stochastically continuous max-stable process given by de Haan's representation \eqref{eq:deHaan}.
Then the decomposition $\eta=\eta_P\vee\eta_N$ with
\[
\eta_P(x)= \bigvee_{i\geq 1} U_iY_i(x)\ind_{\{Y_i\in\cF_P\}} \quad\mbox{and}\quad
\eta_N(x)= \bigvee_{i\geq 1} U_iY_i(x)\ind_{\{Y_i\in\cF_N\}} \label{eq:null2}
\]
is equal (in distribution) to the positive/null decomposition \eqref{eq:posnul}.
\end{corollary}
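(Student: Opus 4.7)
The plan is to apply Lemma~\ref{lem:dec} with a suitable partition of $\cF_0$ into two shift-invariant cones, then realize $\eta$ via the flow representation~\eqref{eq:flow} and use Theorem~\ref{theo:PN} to identify the two decompositions almost surely on this realization.

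First I would check the preparatory measurability conditions. The cones $\cF_P$ and $\cF_N$ defined in \eqref{eq:FP}--\eqref{eq:FN} are disjoint, and they are shift-invariant because $\lambda(B_r+x)/\lambda(B_r)\to 1$ as $r\to\infty$ for every fixed $x\in\cX$. Because the spectral process $Y$ is measurable and separable, the events $\{Y\in\cF_P\}$ and $\{Y\in\cF_N\}$ are measurable. The obstacle at this point is that $\cF_P\cup\cF_N$ need not exhaust $\cF_0$: a function whose Ces\`aro average oscillates strictly between $0$ and a positive value lies in neither cone. To apply Lemma~\ref{lem:dec} I therefore work with the enlarged partition $\cC_1:=\cF_P$ and $\cC_2:=\cF_0\setminus\cF_P$, which is shift-invariant, disjoint, and covers $\cF_0$. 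Lemma~\ref{lem:dec} then yields independent stationary max-stable processes
\[
\hat\eta_P(x)=\bigvee_{i\geq 1}U_iY_i(x)\ind_{\{Y_i\in\cC_1\}},\qquad \hat\eta_N(x)=\bigvee_{i\geq 1}U_iY_i(x)\ind_{\{Y_i\in\cC_2\}},
\]
whose joint distribution depends only on the distribution of $\eta$.

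Next, I realize $\eta$ through the flow representation~\eqref{eq:flow}, so that the spectral functions can be written as $Y_i(\cdot)=f_\cdot(s_i)$ where $\{(s_i,U_i)\}$ is a Poisson point process on $S\times(0,\infty)$ with intensity $\mu\times u^{-2}\,\mathrm{d}u$. By Theorem~\ref{theo:PN}(i), $\mu$--a.e.\ $s\in P$ satisfies $f_\cdot(s)\in\cF_P$, and by Theorem~\ref{theo:PN}(ii), $\mu$--a.e.\ $s\in N$ satisfies $f_\cdot(s)\in\cF_N$. Since $S=P\cup N$, the ``bad'' set $B:=\{s\in S:f_\cdot(s)\notin\cF_P\cup\cF_N\}$ has $\mu(B)=0$. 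As $B$ has zero intensity, the Poisson point process almost surely has no point $s_i\in B$, so for every $i$ one has $Y_i\in\cF_P\cup\cF_N$ a.s., and in fact $\ind_{\{Y_i\in\cF_P\}}=\ind_P(s_i)$ and $\ind_{\{Y_i\in\cF_N\}}=\ind_N(s_i)$ almost surely. In particular, $\ind_{\{Y_i\in\cC_2\}}=\ind_{\{Y_i\in\cF_N\}}$ a.s., so on this realization $\hat\eta_P$ and $\hat\eta_N$ coincide almost surely with the processes $\eta_P$ and $\eta_N$ of the corollary, and simultaneously they coincide with the extremal integrals in~\eqref{eq:posnul}.

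Finally, the conclusion follows by combining the two ingredients. On the flow realization the corollary's decomposition equals the positive/null decomposition \eqref{eq:posnul} almost surely, hence in distribution. Since Lemma~\ref{lem:dec} guarantees that the joint law of the corollary's decomposition depends only on the law of $\eta$, the same equality in distribution holds for any de Haan representation. The main obstacle is the one already noted, namely that $\cF_P\cup\cF_N\ne\cF_0$ in general; this is circumvented precisely by the observation that the residual set is $\mu$-negligible under any flow representation, so it carries no Poisson mass and the indicator $\ind_{\{Y_i\in\cC_2\}}$ may be safely replaced by $\ind_{\{Y_i\in\cF_N\}}$.
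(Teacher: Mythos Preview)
Your proposal is correct and follows essentially the same route as the paper: combine Theorem~\ref{theo:PN} with Lemma~\ref{lem:dec}, while noting that $\cF_P\cup\cF_N\neq\cF_0$ in general but $\bbP[Y\in\cF_P\cup\cF_N]=1$ so the cone decomposition lemma still applies. Your workaround via the auxiliary partition $(\cC_1,\cC_2)=(\cF_P,\cF_0\setminus\cF_P)$ makes explicit what the paper leaves implicit; the only small wrinkle is that in the last paragraph you invoke Lemma~\ref{lem:dec} directly for the pair $(\cF_P,\cF_N)$ on an arbitrary de Haan representation, whereas strictly speaking you have only established $\ind_{\{Y_i\in\cC_2\}}=\ind_{\{Y_i\in\cF_N\}}$ on the flow realization---one more application of Lemma~\ref{lem:dec} to the partition $(\cF_P\cup\cF_N,\,\cF_0\setminus(\cF_P\cup\cF_N))$ shows the residual part vanishes for every representation and closes this.
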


\begin{proof}%[Proof of Corollary~\ref{cor}]
Corollary~\ref{cor} is a direct consequence of Theorem~\ref{theo:PN} and Lemma~\ref{lem:dec}.
Note that although instead of $\cF_P\cup\cF_N=\cF_0$ it holds only that $\mathbb{P}[Y\in \cF_P\cup\cF_N]=1$,  Lemma~\ref{lem:dec} still applies.
\end{proof}

\begin{proof}[Proof of Theorem~\ref{theo:ergodicity}]
We need to prove the equivalence of (c), (e), (f) only; see Section~\ref{subsec:ergodic_properties} for references to the other equivalences. We recall that (c) states that $\eta$ has no positive recurrent component, and
\begin{itemize}
\item[(e)] $\lim_{r\to\infty} \frac 1{\lambda(B_r)} \int_{B_r} Y(x) \lambda(\mathrm{d} x) = 0$ in probability;
\item[(f)] $\liminf_{r\to\infty} \frac 1{\lambda(B_r)} \int_{B_r} Y(x) \lambda(\mathrm{d} x) =0$ a.s.
\end{itemize}
The equivalence of (c) and (f) follows from Corollary~\ref{cor}. Clearly, (e) implies (f) because any sequence converging to $0$ in probability has a subsequence converging to $0$ a.s.

It remains to show that (c) implies (e). Since the positive/null decomposition of $\eta$ does not depend on the choice of the flow representation, we can consider a \textit{minimal} representation $(f_x)_{x\in\mathcal X}$ of $\eta$ by a null-recurrent flow $(\phi_x)_{x\in\mathcal X}$ on a probability space $(S^*,\mathcal B^*, \mu^*)$; see~\cite[Section~3]{WS10} for definition and existence of the minimal representation.   In the proof of Theorem~\ref{theo:PN}, Case~2, we have shown that
$$
M_r := \frac{1}{\lambda(B_r)}\int_{B_r}f_x\lambda(\mathrm{d}x) \underset{r\to\infty}{\longrightarrow} 0 \quad \text{in probability on $(S^*,\mathcal B^*, \mu^*)$}.
$$
However, we are interested in an arbitrary de Haan representation $(Y(x))_{x\in\mathcal X}$ of $\eta$ on a probability space $(S, \mathcal B, \mu)$. This representation need not be generated by a flow, but it can be mapped to the minimal one (see~\cite[Theorem~3.2]{WS10}). More concretely, there is a measurable map $\Phi:S\to S^*$ and a measurable function $h:S\to (0,\infty)$ such that for every $x\in\mathcal X$,
$$
Y(x; s) = h(s) f_x(\Phi(s)) \quad \text{for $\mu$-a.e.\ $s\in S$},
$$
and $\mu^*$ is the push-forward of the (probability) measure $\mu_h({\rm{d}} s) := h(s) \mu({\rm{d}} s)$ by the map $\Phi$. We have
$$
\frac 1 {\lambda(B_r)} \int_{B_r}Y(x;s)\lambda({\rm{d}}x) =
h(s)\cdot M_r(\Phi(s))  \quad \text{for $\mu$-a.e.\ $s\in S$}.
$$
Since $M_r\to 0$ in $\mu^*$-probability as $r\to\infty$, we obtain that for every $\eps>0$,
$$
\mu_h\{M_r\circ \Phi >\eps\} = (\mu_h \circ \Phi^{-1}) \{M_r>\eps\} = \mu^*\{M_r>\eps\}  \underset{r\to\infty}{\longrightarrow} 0.
$$
Since $h$ is strictly positive, this implies that $\mu\{M_r\circ \Phi >\eps\}\to 0$ and hence, $h\cdot (M_r\circ \Phi) \to 0$ in $\mu$-probability, thus proving (e).
%To prove that (f) implies (c) we proceed by contraposition.
%Assuming that (c) does not hold, i.e.\ that $\eta$ has a non-trivial positive component $\eta_P$,
%Theorem~\ref{theo:PN} i) implies
%$$
%\lim_{r\to\infty} \frac{1}{\lambda(B_r)}\int_{B_r}f_x(s)\lambda(\mathrm{d}x) >0 \quad \text{$\mu$--a.e. on $P$},
%$$
%Corollary~\ref{cor} implies that $Y\in \mathcal F_P$ with positive probability,  which is inconsistent with (f).
%Recall the identification $Y(x) = f_x$.
\end{proof}

%\section{Proof of Theorems~\ref{theo:ergodicity},\ref{theo:mixing},\ref{theo:MMM}} \label{sec:proofs_criteria}

\section{Mixing}\label{sec:mixing}
%We say that a collection of measurable functions $(g_x)_{x\in \mathcal{X}}$ defined on $S$ converges to $0$ locally in measure (as $x\to\infty$) if for %every  measurable set $B\subset S$ with $\mu(B)<\infty$ and every $\eps>0$ we have
%$$
%\lim_{x\to\infty} \mu(\{s\in B\colon |g_x(s)|>\eps\}) = 0.
%$$

\subsection{Proof of Theorem~\ref{theo:mixing}}
We need to prove the equivalence of (c) and (d) only, that is
$$
\text{(c):}\; \lim_{x\to\infty} \mathbb E [Y(x) \wedge Y(0)] = 0
\quad \Leftrightarrow \quad
\text{(d):} \; \lim_{x\to\infty} Y(x) = 0 \text{ in probability}.
$$
See Section~\ref{subsec:ergodic_properties} for references to the other equivalences.

Assume that (d) holds, i.e.\ $\lim_{x\to\infty} Y(x)=0$ in probability.  The upper bound $Y(x)\wedge Y(0)\leq Y(0)$ with $Y(0)$ integrable implies that the collection $(Y(x)\wedge Y(0))_{x\in\mathcal{X}}$ is uniformly integrable. Assumption (d) implies that  $Y(x)\wedge Y(0)$ converges in probability to $0$ as $x\to\infty$, whence we deduce that $\mathbb{E}[Y(x)\wedge Y(0)]\to 0$ as $x\to \infty$, i.e.\ (c) is satisfied.

Conversely, we prove the implication (c) $\Rightarrow$ (d).  We may assume that the scale parameter of $\eta(x)$ is $1$, that is $\mathbb P[\eta(x)\leq u] = {\rm e}^{-1/u}$, $u\geq 0$, and $\mathbb E[Y(x)] = 1$, $x\in\mathcal X$.  The relation
$$
\bbE[Y(x)\wedge Y(0)]=2+\log\bbP[\eta(x)\leq 1,\eta(0)\leq 1]
$$
together with the stationarity of $\eta$ implies that for all $x_0\in\cX$,
\begin{equation}\label{eq:E_min_Y_Y_0}
\lim_{x\to\infty}\bbE[Y(x)\wedge Y(x_0)]=0.
\end{equation}
Without restriction of generality we can assume that $\bbP[Y\equiv 0] = 0$ (where, by separability, the event $\{Y\equiv 0\}$ is interpreted as $\cap_{x\in T}\{Y(x)=0\}$ with countable $T\subset \mathcal X$).  Then,  for arbitrary $\varepsilon>0$, there exists $\alpha>0$ and $x_1,\ldots,x_k\in\cX$ such that $\bbP[ \cup_{1\leq i\leq k} \{ Y(x_i)>\alpha\} ]\geq 1-\varepsilon/2$, whence
\[
\bbP[ Y(x_1)+\ldots+Y(x_k)>\alpha ]\geq 1-\varepsilon/2.
\]
With the inequality $(a_1+\ldots+a_k)\wedge b\leq a_1\wedge b+\ldots+a_k\wedge b$, we obtain from~\eqref{eq:E_min_Y_Y_0} that
\[
\lim_{x\to\infty}\bbE[Y(x)\wedge (Y(x_1)+\ldots+Y(x_k))]=0.
\]
These two equations imply,  for all $\delta>0$,
\begin{align*}
\bbP[Y(x)>\delta]
&\leq \bbP[Y(x)>\delta,\ Y(x_1)+\ldots+Y(x_k)>\alpha]+\varepsilon/2\\
&\leq \bbP[Y(x) \wedge (Y(x_1)+\ldots+Y(x_k)) >\delta\wedge\alpha]+\varepsilon/2\\
&\leq \bbE[Y(x)\wedge (Y(x_1)+\ldots+Y(x_k))]/(\delta\wedge\alpha)+\varepsilon/2\\
&\leq \varepsilon
\end{align*}
for large $|x|$.
This proves that $Y(x)\to 0$ in probability as $x\to\infty$.

\subsection{Criterium for mixing in terms of flows}
Given a measurable non-singular flow $(\phi_x)_{x\in \mathcal{X}}$ on a $\sigma$-finite measure space $(S,\cB,\mu)$ define the corresponding group of $L^1$--isometries $(U_x)_{x\in \mathcal{X}}$ by
$$
(U_x g)(s) = \omega_x(s) g(\phi_x(s)), \quad  g\in L^1(S,\mu), \quad x\in \mathcal{X},
$$
where $\omega_x$ is the Radon--Nikodym derivative; see~\eqref{eq:RN}.
\begin{theorem}\label{theo:mixing_for_flows}
Let $\eta$ be a stationary, stochastically continuous max-stable process with a flow representation~\eqref{eq:flow}. Then, the following conditions are equivalent:
\begin{itemize}
\item[(a)] $\eta$ is mixing.
\item[(b)] $\lim_{x\to\infty} \int_S (f_x\wedge f_0){\rm{d}} \mu = 0$.
%\item[(b)] For every measurable set $B\subset S$ with $\mu(B)<\infty$ we have
%$$
%\lim_{x\to\infty} \mu(B\cap \phi_x(B)) = 0.
%$$
%\item[(c)] For every measurable sets $A,B\subset S$ with $\mu(A)<\infty$, $\mu(B)<\infty$ we have
%$$
%\lim_{x\to\infty} \mu(A\cap \phi_x(B)) = 0.
%$$
\item[(c)] $f_x\to 0$ locally in measure as $x\to\infty$. That is, for every  measurable set $B\subset S$ with $\mu(B)<\infty$ and every $\eps>0$ we have
$$
\lim_{x\to\infty} \mu(B \cap \{f_x>\eps\}) = 0.
$$
\item[(d)] For every non-negative function $g\in L^1(S,\mu)$ we have
$$
\lim_{x\to\infty} \int_S ( (U_x g) \wedge g) {\rm{d}} \mu = 0.
$$
\item[(e)] For every non-negative function $g\in L^1(S,\mu)$, $U_x g \to 0$  locally in measure.
\end{itemize}
\end{theorem}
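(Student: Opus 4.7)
The plan is to close the cycle $(\mathrm a)\Leftrightarrow(\mathrm b)$, $(\mathrm d)\Rightarrow(\mathrm b)$, and $(\mathrm b)\Rightarrow(\mathrm c)\Rightarrow(\mathrm e)\Rightarrow(\mathrm d)$. The equivalence $(\mathrm a)\Leftrightarrow(\mathrm b)$ reduces to Theorem~\ref{theo:mixing} via the identity
\[
\int_S (f_x\wedge f_0)\,\mathrm d\mu = \mathbb E[Y(x)\wedge Y(0)],
\]
which one obtains by expressing both sides as $2+\log\mathbb P[\eta(x)\le 1,\eta(0)\le 1]$ after normalizing $\int f_0\,\mathrm d\mu=\mathbb E[Y(0)]=1$ and using $a\vee b+a\wedge b=a+b$. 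The implication $(\mathrm d)\Rightarrow(\mathrm b)$ is just the specialization to $g=f_0$, since $U_xf_0=\omega_x\cdot(f_0\circ\phi_x)=f_x$. For $(\mathrm e)\Rightarrow(\mathrm d)$ I would invoke dominated convergence: $(U_xg)\wedge g\le g\in L^1(\mu)$, so on the finite-measure set $\{g>1/n\}$ the local-in-measure convergence yields $\int_{\{g>1/n\}}(U_xg)\wedge g\,\mathrm d\mu\to 0$, while $\int_{\{g\le 1/n\}}g\,\mathrm d\mu\to 0$ as $n\to\infty$; letting $n$ grow first and then $x\to\infty$ closes the argument.

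The first substantive step is $(\mathrm b)\Rightarrow(\mathrm c)$. Since $U_y$ is an $L^1$-isometry that commutes with $\wedge$ (because $\omega_y\ge 0$), the hypothesis upgrades to
\[
\int f_x\wedge f_y\,\mathrm d\mu \;=\; \int f_{x-y}\wedge f_0\,\mathrm d\mu \;\xrightarrow[x\to\infty]{}\; 0
\]
for each fixed $y\in\mathcal X$. Given $B\subset S$ with $\mu(B)<\infty$ and $\eps>0$, I split $B\cap\{f_x>\eps\}$ according to whether $f_0>\eps$ or not. The portion inside $\{f_0>\eps\}$ lies in $\{f_x\wedge f_0>\eps\}$ and is controlled by Markov's inequality. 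For the other portion, the condition on $\{f_0=0\}$ in~\eqref{eq:flow} forces the $(\phi_x)$-invariant set $S\setminus\bigcup_{y\in\mathcal X}\phi_y^{-1}(\{f_0>0\})\subset\{f_0=0\}$ to be $\mu$-null, so that $B$ is covered, up to $\mu$-measure less than $\delta$, by finitely many translates $\phi_{y_i}^{-1}(\{f_0>\eps'\})$ with $\eps'>0$. On such a translate $f_{y_i}$ is bounded below (after discarding a further small set on which $\omega_{y_i}$ is tiny), and a Markov bound against $f_x\wedge f_{y_i}$ completes the step.

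The second substantive step is $(\mathrm c)\Rightarrow(\mathrm e)$, handled by a cone-approximation argument. For each fixed $y$, (c) gives $U_xf_y=f_{x+y}\to 0$ locally in measure as $x\to\infty$; by monotonicity this extends to every non-negative $g$ dominated by a finite combination $h_{k,N}:=N\sum_{i=1}^kf_{y_i}$. For a general $g\in L^1(\mu)$, $g\ge 0$, I would pick a countable $(y_i)\subset\mathcal X$ with $\bigcup_i\phi_{y_i}^{-1}(\{f_0>0\})=S$ modulo $\mu$ (again via the condition on $\{f_0=0\}$), whence $g\wedge h_{k,N}\to g$ in $L^1$ by monotone convergence as $k,N\to\infty$. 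Then $U_xg=U_x(g\wedge h_{k,N})+U_x(g-g\wedge h_{k,N})$: the first summand tends to $0$ locally in measure by the dominated case, while the second contributes at most $2\|g-g\wedge h_{k,N}\|_1/\eps$ to $\mu(B\cap\{U_xg>\eps\})$ by Markov and the $L^1$-isometry of $U_x$, and this can be made arbitrarily small by choosing $k,N$ large.

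The main obstacle in both substantive steps is the same: transferring control of $f_x$ (respectively $U_xg$) from the region of $S$ where $f_0$ is large to the whole ambient space. The uniform resolution comes from combining the condition on $\{f_0=0\}$ in~\eqref{eq:flow} — which guarantees that flow-translates of $\{f_0>0\}$ cover $S$ modulo null sets — with the elementary fact that $U_y$ is a positive $L^1$-isometry commuting with $\wedge$, so that the mixing estimate for the pair $(f_x,f_0)$ transports without loss to the pair $(f_x,f_y)$ for any $y\in\mathcal X$.
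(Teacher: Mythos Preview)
Your proposal is correct and follows essentially the same approach as the paper: both hinge on the full-support assumption plus an exhaustion argument to cover $B$ (respectively, approximate $g$) by finitely many translates $\{f_{y_i}>c\}$, combined with the positivity and $L^1$-isometry of $U_x$ to transport the estimate $\int f_x\wedge f_0\,\mathrm d\mu\to 0$ to $\int f_x\wedge f_y\,\mathrm d\mu\to 0$. The only difference is organizational---the paper first proves a generic equivalence $(\mathrm b')\Leftrightarrow(\mathrm c')$ for arbitrary $g\in L^1$ (giving $(\mathrm b)\Leftrightarrow(\mathrm c)$ and $(\mathrm d)\Leftrightarrow(\mathrm e)$ at once) and then a direct $(\mathrm b)\Rightarrow(\mathrm d)$, whereas you route through $(\mathrm b)\Rightarrow(\mathrm c)\Rightarrow(\mathrm e)\Rightarrow(\mathrm d)$, with your approximation $g\wedge\bigl(N\sum_{i\le k} f_{y_i}\bigr)$ in the step $(\mathrm c)\Rightarrow(\mathrm e)$ playing exactly the role of the paper's $(\mathrm b)\Rightarrow(\mathrm d)$ argument.
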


\begin{proof}
The equivalence of (a) and (b) is due to Stoev; see Theorem~3.4 in~\cite{S08}. We prove that (b) is equivalent to (c), (d), (e).

%\vspace*{2mm}
%\noindent
Take a non-negative function $g\in L^1(S,\mu)$. We prove that the following conditions are equivalent:
\begin{itemize}
\item [(b')] $\lim_{x\to\infty} \int_S ((U_x g)\wedge g){\rm{d}} \mu = 0$.
\item [(c')] $U_x g\to 0$ locally in measure, as $x\to\infty$.
\end{itemize}
Once the equivalence of (b') and (c') has been established, we immediately obtain the equivalence of (b) and (c) (by taking $g=f_0$) and the equivalence of (d) and (e).

\vspace*{2mm}
\noindent
\textit{Proof of (c') $\Rightarrow$ (b').}
Let $U_x g\to 0$ locally in measure, as $x\to\infty$. We prove that (b') holds. Fix some $\eps>0$. The sets $B_n:=\{g>\frac 1n\}$, $n\in\mathbb N$, are measurable, have finite measure (since $g\in L^1(S,\mu)$), and
$$
\lim_{n\to\infty}\int_S g \ind_{S\bsl B_n} {\rm{d}}\mu=0
$$
by the dominated convergence theorem. Hence, by taking $n$ sufficiently large we can achieve that the set $B=B_n$ satisfies $\mu(B) < \infty$ and
%\begin{equation}\label{eq:proof_mix1}
$$
\int_{S\bsl B} g {\rm{d}}\mu \leq \eps.
$$
%\end{equation}
The collection $(U_x g \wedge g)_{x\in\mathcal X}$ is uniformly integrable on $B$ since $U_x g \wedge g\leq g$. Also, we know that $U_x g \wedge g \to 0$ (as $x\to\infty$) in measure on $B$. It follows that
$$
\lim_{x\to\infty} \int_B U_x g \wedge g \mathrm{d} x = 0.
$$
Thus, condition (b') holds.
%Consider measurable sets
%\begin{equation}\label{eq:proof_mix2}
%A_x = B \cap \left\{U_x g > \frac{\eps}{3\mu(B)}\right\}, \quad x\in\mathcal{X}.
%\end{equation}
%By assumption (c'), $\lim_{x\to\infty} \mu(A_x) = 0$.  Since a collection consisting of a single function $g$ is uniformly %integrable, for sufficiently large $|x|$ we have
%\begin{equation}\label{eq:proof_mix3}
%\int_{A_x} g {\rm{d}}\mu < \frac{\eps}{3}.
%\end{equation}
%Combining~\eqref{eq:proof_mix1}, \eqref{eq:proof_mix2} and \eqref{eq:proof_mix3}, we obtain that for all such $x$,
%$$
%\int_S ((U_x g)\wedge g) {\rm{d}}\mu \leq  \int_{S\bsl B} g {\rm{d}}\mu + \int_{A_x} g {\rm{d}}\mu + \int_{B \bsl A_x} U_x %g {\rm{d}}\mu
%\leq
%\frac{\eps}{3} + \frac{\eps}{3} + \frac{\eps \mu(B)}{3\mu(B)}
%=
%\eps.
%$$
%Thus, condition (b') holds.

\vspace*{2mm}
\noindent
\textit{Proof of (b') $\Rightarrow$ (c').} We argue by contradiction. Assume that $U_x g \nrightarrow 0$ locally in measure as $x\to\infty$. Our aim is to prove that $(b')$ is violated. By our assumption, there is a measurable set $B\subset S$ and $\eps>0$ such that $0 < \mu(B) < \infty$ and
\begin{equation}\label{eq:proof_mix5}
\mu(\{U_{x_i} g > \eps\}\cap B) > \eps, \quad i\in \mathbb N,
\end{equation}
where $x_1,x_2,\ldots \to \infty$ is some sequence in $\mathcal{X}$. Denote by $\mathcal{H}$ the family consisting of the sets $\supp U_x g$, $x\in \mathcal{X}$, together with all measurable subsets of these sets. Let $S^*$ be the measurable union of this family; see~\cite[pp.~7--8]{A98} for the proof of its existence. By the exhaustion lemma~\cite[pp.~7--8]{A98}, we can find countably many sets $A_1,A_2,\ldots\in \mathcal{H}$ such that $S^*=A_1\cup A_2\cup\ldots$. It follows that we can find finitely many $z_1,\ldots,z_m\in\mathcal{X}$ such that
$$
\mu\left( (B\cap S^*) \bsl \bigcup_{j=1}^m \supp U_{z_j}g\right) < \frac{\eps}{2}.
$$
Together with~\eqref{eq:proof_mix5} (where $B$ can be replaced by $B\cap S^*$ because $\{U_{x_i} g > \eps\}\subset S^*\mod \mu$),
%which can be written in the form
%$$
%\mu(\{U_{x_i} g > \eps\}\cap (B\cap S^*)) > \eps, \quad i\in \mathbb N,
%$$
this implies that  for all $i\in\mathbb N$,
$$
\mu\left(\{U_{x_i}g > \eps\} \cap \bigcup_{j=1}^m \supp U_{z_j}g\right) > \frac{\eps}{2}.
$$
It follows that there is $j\in\{1,\ldots,m\}$ and a subsequence $y_1,y_2,\ldots\to \infty$ of $x_1,x_2,\ldots$ such that for all $i\in \mathbb {N}$,
$$
\mu\left(\{U_{y_i}g > \eps\} \cap \supp U_{z_j}g\right) > \frac{\eps}{2m}.
$$
Put $z=z_j$. For a sufficiently small $\delta\in (0,\eps)$  we have
\begin{equation}\label{eq:proof_mix4}
\mu\left(\{U_{y_i}g > \delta\} \cap \{U_{z}g > \delta\}\right) > \frac{\eps}{4m}.
\end{equation}
By the flow property and~\eqref{eq:proof_mix4} it follows that for all $i\in \mathbb{N}$,
$$
\int_{S} ((U_{y_i-z}g) \wedge g) {\rm{d}}\mu = \int_{S} ((U_{y_i}g) \wedge (U_zg) ) {\rm{d}}\mu >  \frac{\eps}{4m} \delta > 0.
$$
But this contradicts (b').

%\vspace*{2mm}
%\noindent
%\textit{Proof of (c) $\Leftrightarrow$ (e).} Clearly, (e) implies (c) because $f_x = U_x f_0$, so we need to prove the converse implication. Let $U_x %f_0\to 0$ locally in measure as $x\to\infty$. Fix any non-negative function $g_0 \in L^1(S,\mu)$,  $\eps>0$ and a measurable set $B\subset S$ with %$\mu(B)<\infty$. We have to show
%$$
%\lim_{x\to\infty} \mu (B\cap\{U_x g_0>\eps\}) = 0.
%$$
%Take some $\delta>0$. We can find finitely many $z_1,\ldots,z_m\in \mathcal{X}$ such that
%$$
%\mu\left(B \bsl \bigcup_{j=1}^m \supp f_{z_j}\right) < \frac{\delta}{4}.
%$$
%For sufficiently small $\eta>0$ we then have
%$$
%\mu\left(B \bsl \bigcup_{j=1}^m \{f_{z_j}>\eta\} \right) < \frac{\delta}{2}.
%$$
%It suffices to show that for every $j\in \{1,\ldots,m\}$ and all sufficiently large $|x|$,
%$$
%\mu(\{U_x g_0>\eps\} \cap \{f_{z_j}>\eta\}) < \frac{\delta}{2}.
%$$
%To this end, it suffices to show that for some $K>0$ (to be specified later)
%$$
%\mu(\{U_x (g_0\ind_{g_0 < Kf_0})>\frac 12 \eps\} \cap \{f_{z_j}>\eta\}) < \frac{\delta}{2},
%\quad
%\mu(\{U_x (g_0\ind_{g_0 \geq  Kf_0})>\frac 12 \eps\} \cap \{f_{z_j}>\eta\}) < \frac{\delta}{2},
%$$

\vspace*{2mm}
\noindent
\textit{Proof of (d) $\Rightarrow$ (b).} Trivial, because $f_x = U_x f_0$.

\vspace*{2mm}
\noindent
\textit{Proof of (b) $\Rightarrow$ (d).} %It is trivial that (d) implies (b) because . We prove that (b) implies (d).
For every non-negative function $g\in L^1(S,\mu)$ we have to show that
$$
\lim_{x\to\infty} \int_S (U_x g\wedge g) {\rm{d}}\mu = 0.
$$
Fix some $\eps>0$. By the same argument relying on the dominated convergence theorem as above, we can find a sufficiently large $K>0$ such that the set $B:= \{1/K \leq g \leq K\}$ satisfies
\begin{equation}\label{eq:proof_mix_bd1}
\int_{S\bsl B} g {\rm{d}\mu} < \eps.
\end{equation}
The set $B$ has finite measure because $g$ is integrable. By the uniform integrability of a single function $g$, there is $\delta>0$ such that every for every measurable set $A\subset B$ with $\mu(A)<\delta$ we have $\int_A g {\rm{d}\mu} < \eps$.

We argue that it is possible to find finitely many $z_1,\ldots,z_m\in \mathcal{X}$ such that the sets $\supp f_{z_1}, \ldots, \supp f_{z_m}$ cover $B$ up to a set of measure at most $\delta/2$. Indeed, let $\mathcal{H}$ be the family consisting of the sets $\supp f_x$, $x\in \mathcal{X}$, together with all measurable subsets of these sets. In the definition of the flow representation~\eqref{eq:flow} we made a ``full support'' assumption which assures that the measurable union of $\mathcal{H}$ is the whole of $S$. By the exhaustion lemma~\cite[pp.~7--8]{A98}, we can represent $S$ as a disjoint union of countably many sets $A_1,A_2,\ldots \in \mathcal{H}$. It follows that we can find finitely many $z_1,\ldots,z_m\in \mathcal{X}$ such that
$$
\mu\left(B \bsl \bigcup_{j=1}^m \supp f_{z_j}\right) <\frac{\delta}{2}.
$$
By taking $c>0$ sufficiently small, we can even achieve that the sets $\{f_{z_1}>c\},\ldots,\{f_{z_m}>c\}$ cover $B$ up to a set of measure at most $\delta$, that is for
$$
D := B \bsl \bigcup_{j=1}^m \{f_{z_j}>c\}
$$
we have $\mu(D)<\delta$. By construction of $\delta$ it follows that
\begin{equation}\label{eq:proof_mix_bd2}
\int_D g {\rm{d}\mu} < \eps.
\end{equation}
For every $j\in\{1,\ldots,m\}$, on the set $A_j := B\cap \{f_{z_j}>c\}$ we have the estimates $g\leq K$ and $f_{z_j}>c$. Hence, $g\ind_{A_j}\leq \frac{K}{c} f_{z_j}$ and, by non-negativity of $U_x$,
\begin{equation}\label{eq:proof_mix6}
\int_{B}  U_x(g\ind_{A_j}) \wedge  g {\rm{d}\mu}
\leq
\int_{B} \left(\frac{K}{c} f_{x+z_j}\right) \wedge K  {\rm{d}\mu}
\overset{}{\underset{x\to\infty}\longrightarrow} 0
%\leq
%\frac{K}{c} \int_S (f_{x+z_j}  \wedge f_{z_j})  {\rm{d}\mu}.
\end{equation}
because $\frac{K}{c} f_{x+z_j}\to 0$ locally in measure by assumption (b) which, as we already know, is equivalent to (c).
Writing $g= g \ind_B + g \ind_{S\backslash B}$, we obtain
$$
\int_{S}  (U_x g) \wedge  g {\rm{d}\mu}
\leq
\int_S U_x(g \ind_{S\backslash B}) {\rm{d}}\mu
+
\int_{S} U_{x}(g\ind_B) \wedge g {\rm{d}}\mu.
$$
We have $\int_S U_x(g \ind_{S\backslash B}) {\rm{d}}\mu\leq \eps$ using~\eqref{eq:proof_mix_bd1} and because $U_x$ is $L^1$-isometry. The second integral can be estimated as follows:
$$
\int_{S} U_{x}(g\ind_B) \wedge g {\rm{d}}\mu
\leq
\int_{S\backslash B} g {\rm{d}}\mu +
\int_{B}  U_x (g\ind_B) \wedge  g {\rm{d}\mu}
\leq
\eps + \int_{B}  U_x \left(g\ind_D + \sum_{j=1}^m g \ind_{A_j}\right) \wedge  g {\rm{d}\mu}.
$$
%$$
%\int_{S}  (U_x g) \wedge  g {\rm{d}\mu}
%\leq
%\eps + \int_{B}  (U_x g) \wedge  g {\rm{d}\mu}
%\leq
%\eps + \int_{B}  U_x \left(g\ind_D + \sum_{j=1}^m g \ind_{A_j}\right) \wedge  g {\rm{d}\mu}.
%$$
Using the inequality $(a_1+\ldots+a_k)\wedge b \leq a_1\wedge b +\ldots + a_k\wedge b$, we obtain
$$
\int_{S}  U_x (g\ind_B) \wedge  g {\rm{d}\mu}
\leq
\eps + \int_{B}  U_x (g\ind_D) {\rm{d}}\mu
+ \sum_{j=1}^m \int_B U_x(g \ind_{A_j}) \wedge  g {\rm{d}\mu}.
$$
Since $U_x$ is $L^1$-isometry, we have $\int_{B}  U_x (g\ind_D) {\rm{d}}\mu\leq \eps$ by~\eqref{eq:proof_mix_bd2}. Recalling~\eqref{eq:proof_mix6} we obtain that
%$$
%\int_{B\cap \{f_{z_j}>c\}}  ((U_xg) \wedge  g) {\rm{d}\mu}
%\leq
%\frac{K}{c} \int_{B\cap \{f_{z_j}>c\}} ((U_x f_{z_j}) \wedge f_{z_j})  {\rm{d}\mu}
%\leq
%\frac{K}{c} \int_S (f_{x+z_j}  \wedge f_{z_j})  {\rm{d}\mu}.
%$$
%By assumption (b), the right-hand side converges to $0$ as $\to\infty$, hence for all $j\in \{1,\ldots,m\}$ we have
%\begin{equation}\label{eq:proof_mix_bd3}
%\lim_{x\to\infty} \int_{B\cap \{f_{z_j}>c\}}  ((U_xg) \wedge  g) {\rm{d}\mu}  = 0.
%\end{equation}
%We have the estimate
%$$
%\int_S (U_x g\wedge g) {\rm{d}\mu}
%\leq
%\int_{S\bsl B} g {\rm{d}\mu}
%+
%\int_{A} g {\rm{d}\mu}
%+
%\sum_{j=1}^m \int_{B\cap \{f_{z_j}>c\}}  ((U_xg) \wedge  g) {\rm{d}\mu}
%$$
%and hence, by combining~\eqref{eq:proof_mix_bd1}, \eqref{eq:proof_mix_bd2}, \eqref{eq:proof_mix_bd3} it follows that
$$
\limsup_{x\to\infty}  \int_S ((U_x g)\wedge g) {\rm{d}\mu} \leq 3\eps.
$$
Since this is true for every $\eps>0$, the limit is in fact $0$ and we obtain (d).
\end{proof}

\begin{remark}
Condition (d) in Theorem~\ref{theo:mixing_for_flows} can be replaced by the following seemingly stronger one:
For every non-negative functions $g,h\in L^1(S,\mu)$ we have
$$
\lim_{x\to\infty} \int_S ( (U_x g) \wedge h ) {\rm{d}} \mu = 0.
$$
It is clear that this condition implies (d). To see the converse, note that by the non-negativity property of $U_x$,
$$
\int_S (U_x g\wedge h) {\rm{d}}\mu
\leq
\int_S (U_x (g\vee h)\wedge (g\vee h)) {\rm{d}}\mu.
$$
\end{remark}

\subsection{Mixing/non-mixing decomposition}
It is known that the Hopf decomposition can be used to characterize the mixed moving maximum property, whereas Neveu decomposition characterizes ergodicity. In the next proposition we construct a decomposition which characterizes mixing. For measure-preserving maps, this decomposition was introduced by Krengel and Sucheston~\cite{krengel_sucheston,krengel_sucheston_BULL}. E.\ Roy~\cite{roy_ID} used it to characterize mixing of sum-infinitely divisible processes. Note that we consider non-singular flows (which is a broader class than measure preserving flows).
\begin{theorem}\label{theo:mix_decomposition}
Consider a non-singular, measurable flow $(\phi_x)_{x\in\mathcal X}$ acting on a $\sigma$-finite measure space $(S,\cB,\mu)$. There is a decomposition of $S$ into two disjoint measurable sets $S = N_{0} \cup N_{+}$, $N_0\cap N_+ = \varnothing$, such that
\begin{itemize}
\item[(i)] $N_0$ and $N_+$ are $(\phi_x)_{x\in \mathcal{X}}$-invariant, modulo null sets.
\item[(ii)] For every non-negative function $g\in L^1(S, \mu)$ supported on $N_0$,
$$
\lim_{x\to \infty} \int_S (U_x g\wedge g) {\rm{d}\mu} =0.
$$
\item[(iii)] For every nonnegative function $h\in L^1(S, \mu)$ supported on $N_+$ and not vanishing identically,
$$
\limsup_{x\to \infty} \int_S (U_x h\wedge h) {\rm{d}\mu} > 0.
$$
\end{itemize}
Properties (ii) and (iii) define the components $N_+$ and $N_0$ uniquely, modulo null sets.
\end{theorem}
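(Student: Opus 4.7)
The plan is to define $N_0$ as the largest measurable subset of $S$ supporting an $L^1_+$-function whose $(U_x)$-orbit vanishes locally in measure, then verify that this choice satisfies (i)--(iii) and is unique.

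First, I introduce
\[
\mathcal{M} := \Big\{ g \in L^1_+(S,\mu) \,:\, \lim_{x\to\infty}\int_S (U_x g \wedge g)\, {\rm d}\mu = 0\Big\}.
\]
The equivalence $(b')\Leftrightarrow(c')$ proved inside Theorem~\ref{theo:mixing_for_flows} is a statement about an individual $g\in L^1_+$ (independent of mixing of $\eta$) and gives: $g\in\mathcal{M}$ iff $U_xg\to 0$ locally in measure as $x\to\infty$. From this one reads off the key closure properties of $\mathcal{M}$: (a) invariance under non-negative scalar multiplication; (b) hereditary property, $0\leq g'\leq g\in\mathcal{M}\Rightarrow g'\in\mathcal{M}$ (since $U_xg'\leq U_xg$); (c) closure under finite sums (linearity of $U_x$); (d) closure under the flow, $U_yg\in\mathcal{M}$ whenever $g\in\mathcal{M}$ (from $U_{x+y}=U_xU_y$); and (e) closure under monotone $L^1$-limits from below: if $h_n\uparrow h\in L^1_+$ with $h_n\in\mathcal{M}$, then $h\in\mathcal{M}$. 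Property (e) is established by decomposing $U_xh=U_xh_n+U_x(h-h_n)$ and combining the Markov bound $\mu\{U_x(h-h_n)>\eps\}\leq \|h-h_n\|_1/\eps$ (using the $L^1$-isometry of $U_x$) with the local-measure convergence of $U_xh_n$; this mirrors the tail argument from the proof of $(b)\Rightarrow(d)$ in Theorem~\ref{theo:mixing_for_flows}.

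Next I apply Aaronson's exhaustion lemma~\cite[pp.~7--8]{A98} to the family $\{\supp g : g\in\mathcal{M}\}$, which is hereditary since $g\ind_C\in\mathcal{M}$ for any $g\in\mathcal{M}$ and any measurable $C$ by (b). The lemma produces countably many $g_n\in\mathcal{M}$ whose supports cover the measurable union of the family modulo null sets; normalising so that $\|g_n\|_1\leq 1$ and setting $g^*:=\sum_n 2^{-n}g_n$ yields an element of $\mathcal{M}$ by a countable-sum variant of the tail argument in (e). I define $N_0:=\supp g^*$ and $N_+:=S\setminus N_0$. Invariance (i) follows from (d): $U_yg^*\in\mathcal{M}$, so $\supp U_yg^*=\phi_y^{-1}(N_0)$ (equality mod $\mu$, using that $\omega_y>0$ a.e.\ by non-singularity) is contained in $N_0$ by the maximality property of $N_0$; applying this with $-y$ gives the reverse inclusion.

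For (ii), let $h\in L^1_+$ be supported on $N_0$. Since $g^*>0$ on $N_0\supset\supp h$, the truncations $h\wedge(ng^*)$ increase pointwise to $h$; each lies in $\mathcal{M}$ by (b), hence $h\in\mathcal{M}$ by (e). Property (iii) follows by contrapositive: if $h\in L^1_+$ were supported on $N_+$ with $h\not\equiv 0$ and $h\in\mathcal{M}$, then the maximality of $N_0$ would force $\supp h\subset N_0$, contradicting $\supp h\subset N_+$. Uniqueness is obtained by applying (ii) for one decomposition and (iii) for another to each $L^1_+$-function supported on the symmetric difference $N_0\triangle N_0'$, using $\sigma$-finiteness. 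The main technical point is the upgrade of finite-sum closure (c) to the countable-sum closure needed to realise the measurable union as $\supp g^*$; this is accomplished by the Markov-bound tail argument described in (e), and everything else is formal bookkeeping patterned on the Hopf and Neveu constructions recalled earlier in the paper.
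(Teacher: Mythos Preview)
Your proof is correct and follows essentially the same route as the paper: both define $N_0$ via the exhaustion lemma applied to a hereditary family characterized by the condition $U_xg\to 0$ locally in measure (equivalently the integral condition, via the $(b')\Leftrightarrow(c')$ argument from Theorem~\ref{theo:mixing_for_flows}), and both derive (i) from flow-invariance of the defining condition and (iii) by contraposition using maximality. The only cosmetic difference is that you work with the function class $\mathcal{M}\subset L^1_+$ and build a single dominating $g^*\in\mathcal{M}$ with $\supp g^*=N_0$ so that (ii) follows from truncation plus monotone $L^1$-closure, whereas the paper works directly with the family of finite-measure sets $A$ satisfying $U_x\ind_A\to 0$ locally in measure and carries out the $\eps$-bookkeeping for (ii) inline; either packaging yields the same argument.
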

\begin{proof}
Let $\mathcal{H}$ be the family of all measurable sets $A\subset S$ such that $\mu(A)<\infty$ and $U_x \ind_A\to 0$ locally in measure, as $x\to\infty$. By the positivity of $U_x$, the family $\mathcal{H}$ is hereditary, that is it contains with every set $A$ all its measurable subsets. Denote by $N_0$ the measurable union of $\mathcal{H}$; see~\cite[pp.~7--8]{A98} for its existence.

\vspace*{2mm}
\noindent
\textit{Proof of (ii).}  Take any non-negative function $g\in L^1(S,\mu)$ supported on $N_0$.
Fix $\eps>0$. Let $K$ be sufficiently large so that the set $B:=\{g\leq K\}$ satisfies
\begin{equation}\label{eq:proof_mix_bd2_copy0}
\int_{S\bsl B} g {\rm{d}}\mu < {\eps}.
\end{equation}
Let $\delta>0$ be such that for every measurable set $D\subset B$ with $\mu(D) <\delta$ we have $\int_D g{\rm{d}}\mu <\eps$. By the exhaustion lemma~\cite[pp.~7--8]{A98} we can find finitely many sets $A_1,\ldots, A_m\in\mathcal{H}$ such that $\mu(B\bsl \cup_{j=1}^m{A_j}) < \delta$ and hence,
\begin{equation}\label{eq:proof_mix_bd2_copy}
\int_{B\bsl A} g{\rm{d}}\mu < {\eps},
\end{equation}
where we introduced the set $A := A_1\cup \ldots\cup A_m$. For every $j\in\{1,\ldots,m\}$ we have, by the positivity of $U_x$,
\begin{equation}\label{eq:proof_mix6_copy}
\int_{B}  (U_x(g\ind_{A_j\cap B})) \wedge  g {\rm{d}\mu}
\leq
\int_{B} (KU_x(\ind_{A_j\cap B})) \wedge K  {\rm{d}\mu}
\overset{}{\underset{x\to\infty}\longrightarrow} 0
%\leq
%\frac{K}{c} \int_S (f_{x+z_j}  \wedge f_{z_j})  {\rm{d}\mu}.
\end{equation}
because $U_x \ind_{A_j\cap B}\to 0$ locally in measure.
We have the estimate
$$
\int_S U_x g\wedge g {\rm{d}\mu}
\leq
\int_{S\bsl B} g {\rm{d}}\mu + \int_{B} (U_xg\wedge g){\rm{d}}\mu
\leq
\eps + \int_{B}  U_x \left(g\ind_{S\backslash (A\cap B)}  + \sum_{j=1}^m g \ind_{A_j\cap B}\right) \wedge  g {\rm{d}\mu}.
$$
Using the inequality $(a_1+\ldots+a_k)\wedge b \leq a_1\wedge b +\ldots + a_k\wedge b$, we obtain
$$
\int_{S}  U_x g \wedge  g {\rm{d}\mu}
\leq
\eps + \int_{B}  U_x (g\ind_{S\bsl (A\cap B)}) {\rm{d}}\mu
+ \sum_{j=1}^m \int_B U_x(g \ind_{A_j\cap B}) \wedge  g {\rm{d}\mu}.
$$
Since $U_x$ is an $L^1$-isometry, we have $\int_{B}  U_x (g\ind_{S\bsl (A\cap B)}) {\rm{d}}\mu\leq 2\eps$ by~\eqref{eq:proof_mix_bd2_copy0} and~\eqref{eq:proof_mix_bd2_copy}. By~\eqref{eq:proof_mix6} we obtain that
$$
\limsup_{x\to\infty}  \int_S U_x g\wedge g {\rm{d}\mu} \leq 3\eps,
$$
which proves (ii) since $\eps>0$ is arbitrary.

\vspace*{2mm}
\noindent
\textit{Proof of (iii).} We argue by contraposition. Assume that a non-negative function $h\in L^1(S, \mu)$ supported on $N_+:=S\bsl N_0$ and not vanishing identically satisfies
$
\lim_{x\to \infty} \int_S (U_x h\wedge h) {\rm{d}\mu} =0.
$
For a sufficiently small $b>0$, the set $A:=\{h>b\}$ has positive,  finite measure, and  (by the positivity of $U_x$) satisfies
$$
\lim_{x\to \infty} \int_S U_x \ind_A \wedge \ind_A {\rm{d}\mu} =0.
$$
Since $U_x$ preserves pointwise minima and is an $L^1$-isometry,  we obtain that for every $x_0\in\mathcal X$,
\begin{equation}\label{eq:tech0}
\lim_{x\to \infty} \int_S (U_x \ind_A) \wedge (U_{x_0}\ind_A) {\rm{d}\mu} =0.
\end{equation}
Since $A\subset N_+$ and $\mu(A)>0$, the definition of $N_0$ implies that the sequence $U_x \ind_A$ does not converge locally in $\mu$-measure, as $x\to\infty$.
Hence, we can find a  measurable set $B\subset S$ with $\mu(B)<\infty$ and $a>0$ such that
\begin{equation}\label{eq:tech1}
\limsup_{x\to\infty} \mu(B\cap \{U_x\ind_A > a\}) >a.
\end{equation}
Let $B_0$ be the measurable union of $\supp U_x \ind_A$, $x\in\mathcal X$. Since replacing $B$ by $B\cap B_0$ does not change the validity of~\eqref{eq:tech1}, we can assume that $B\subset B_0$. By the exhaustion lemma, see~\cite[pp.~7--8]{A98}, we can find finitely many $x_1,\ldots,x_m\in\mathcal X$ and $c>0$ such that the set $B$ is covered, up to a subset of measure at most $a/2$, by the sets $\{U_{x_1}\ind_A>c\},\ldots,\{U_{x_m}\ind_A>c\}$. It follows that for every $x\in\mathcal X$ satisfying $\mu(B\cap \{U_x\ind_A > a\}) \geq  a$ we also have
$$
\mu(\{U_x\ind_A>a\} \cap \{U_{x_i}\ind_A>c\}) > a/(4m)
$$
for at least one $i\in \{1,\ldots,m\}$. But this contradicts~\eqref{eq:tech0}, thus proving (iii).

\vspace*{2mm}
\noindent
\textit{Proof of the uniqueness.} Let $S=\tilde N_0 \cup \tilde N_+$ be another disjoint decomposition enjoying properties (ii) and (iii). If $\mu(N_0\cap \tilde N_+)>0$, then we can find a set $A\subset N_0\cap \tilde N_+$ with $\mu(A)\neq 0,\infty$ (recall that $\mu$ is $\sigma$-finite). The indicator function of this set must satisfy both $\lim_{x\to \infty} \int_S (U_x\ind_A \wedge \ind_A) {\rm{d}\mu} =0$ (because $A\subset N_0$) and $\limsup_{x\to \infty} \int_S (U_x\ind_A \wedge \ind_A) {\rm{d}\mu}>0$ (because $A\subset \tilde N_+$), which is a contradiction. Similarly, the assumption $\mu(\tilde N_0\cap N_+)>0$ leads to a contradiction. Hence, the decompositions $S=N_0 \cup N_+$ and $S=\tilde N_0 \cup \tilde N_+$ coincide modulo $\mu$.

\vspace*{2mm}
\noindent
\textit{Proof of (i).}
We show that the decomposition $S = N_0\cup N_+$ is $(\phi_x)_{x\in \mathcal{X}}$-invariant, modulo null sets.
It is easy to check that for every $y\in\mathcal X$ the decomposition $S = \phi_y(N_0)\cup \phi_y(N_+)$ enjoys properties (ii) and (iii). Indeed, if $g$ is a function supported on $\phi_y(N_0)$, then $U_{y}g$ is supported on $N_0$ and hence,
$$
\lim_{x\to \infty} \int_S (U_x g \wedge g) {\rm{d}\mu} = \lim_{x\to \infty} \int_S U_{y}(U_x g \wedge g) {\rm{d}\mu}= \lim_{x\to \infty} \int_S (U_{x} U_{y} g \wedge U_{y} g) {\rm{d}\mu} =0
$$
by (ii). Similarly, one verifies that $\phi_y(N_+)$ satisfies (iii). The uniqueness of the decomposition implies that $N_0=\phi_y(N_0)$ and $N_+=\phi_y(N_+)$ modulo null sets.
%Let $B\in \mathcal {H}$. It suffices to show that $\phi_y(B)\in \mathcal{H}$, for all $y\in \mathcal{X}$. First of all, note that $\mu(\phi_y(B)) <\infty$.  Further, we have
%$$
%(U_x \ind_{\phi_y(B)}) (s)
%=
%\omega_x(s) \ind_{\phi_y(B)} (\phi_x(s))
%=
%\omega_x(s) \ind_{B} (\phi_{x-y}(s)).
%$$
%Using the identity $\omega_{x}(s) = \omega_{x-y}(s) \omega_{y}(\phi_{x-y}(s))$ we obtain
%$$
%(U_x \ind_{\phi_y(B)}) (s)  = \omega_{x-y}(s) \cdot (\ind_B\cdot \omega_y)(\phi_{x-y}(s)) = (U_{x-y} (\ind_B\cdot %\omega_y))(s),
%$$
%which converges to $0$ locally in measure, as $x\to\infty$, because $\ind_B\cdot \omega_y$ is integrable and supported on %$N_0$. This shows that both $N_0$ and $N_+$ are $(\phi_x)_{x\in \mathcal{X}}$-invariant, modulo null sets.
\end{proof}

%\begin{remark}
%Krengel and Sucheston~\cite{krengel_sucheston} introduced a notion of mixing for non-singular maps of a possibly infinite %measure spaces.  A non-singular map $T: S\to S$ is called mixing if for every measurable set $A\subset S$ with %$\mu(A)<\infty$, the sequence $(T^{-n}(A))_{n\in\bbN}$ is semiremotely trivial meaning that
%$$
%\lim_{n\to\infty} \int_{T^{-n}(A)} f {\rm{d}} \mu = 0
%$$
%for all $f\in L^1(S,\mu)$ with $\int_S f{\rm{d}}\mu = 0$. (The original definition of semiremotely trivial sequence was %different~\cite{krengel_sucheston}, here we use an equivalent definition; see Theorem~1.3 (i) and (iii) %in~\cite{krengel_sucheston}). From the above we see that the restriction of $\phi_x$, $x\in \mathcal{X}$ ???
%\end{remark}

\begin{remark}
Krengel and Sucheston~\cite{krengel_sucheston} called a measure-preserving flow $(\phi_x)_{x\in\mathbb Z}$ mixing if
$$
\lim_{x\to\infty} \mu(\phi_x A\cap A) = 0
$$
for every set $A\in\mathcal B$ with $\mu(A)<\infty$.  Thus, in the measure-preserving case, the decomposition from Theorem~\ref{theo:mix_decomposition} coincides with the decomposition of Krengel and Sucheston~\cite{krengel_sucheston,krengel_sucheston_BULL}.
\end{remark}

The decomposition introduced in Theorem~\ref{theo:mix_decomposition} characterizes mixing of max-stable processes.
\begin{theorem}\label{theo:mixing_for_flows_decomposition}
Let $\eta$ be a stationary, stochastically continuous max-stable processes with a flow representation~\eqref{eq:flow}. Then $\eta$
is mixing if and only if $N_+=\varnothing \mod \mu$.
\end{theorem}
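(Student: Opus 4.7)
The proof plan is to combine the characterization of mixing in Theorem~\ref{theo:mixing_for_flows}(d) with properties (ii) and (iii) of the decomposition in Theorem~\ref{theo:mix_decomposition}. Specifically, Theorem~\ref{theo:mixing_for_flows} tells us that $\eta$ is mixing if and only if
\[
\lim_{x\to\infty}\int_S (U_x g\wedge g)\, {\rm d}\mu=0 \quad \text{for every non-negative } g\in L^1(S,\mu),
\]
so the task reduces to showing that this condition on $(U_x)_{x\in\mathcal X}$ holds for all such $g$ precisely when $N_+=\varnothing\mod\mu$.

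For the direction $N_+=\varnothing\Rightarrow$ mixing, I would argue as follows. Suppose $\mu(N_+)=0$. Then every non-negative $g\in L^1(S,\mu)$ is, modulo a null set, supported on $N_0$. Property~(ii) of Theorem~\ref{theo:mix_decomposition} then gives
\[
\lim_{x\to\infty}\int_S (U_x g\wedge g)\, {\rm d}\mu=0,
\]
so condition (d) of Theorem~\ref{theo:mixing_for_flows} is verified and $\eta$ is mixing.

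For the converse, I would proceed by contraposition. Assume $\mu(N_+)>0$. Since $\mu$ is $\sigma$-finite, we can select a measurable subset $A\subset N_+$ with $0<\mu(A)<\infty$, and take $h:=\ind_A\in L^1(S,\mu)$, a non-negative function supported on $N_+$ and not identically zero. Property~(iii) of Theorem~\ref{theo:mix_decomposition} yields
\[
\limsup_{x\to\infty}\int_S (U_x h\wedge h)\, {\rm d}\mu>0,
\]
contradicting condition (d) of Theorem~\ref{theo:mixing_for_flows}. Hence $\eta$ is not mixing.

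There is essentially no hard step here, since both ingredients are already in place; the only minor technical point is using $\sigma$-finiteness to produce an integrable test function $h$ supported on $N_+$. This is precisely the argument used already in the uniqueness part of Theorem~\ref{theo:mix_decomposition}, so the proof is short and clean.
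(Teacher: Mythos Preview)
Your proposal is correct and follows essentially the same approach as the paper, which simply states that the result ``follows immediately from Theorem~\ref{theo:mixing_for_flows}''; you have merely spelled out the immediate argument by invoking condition~(d) of Theorem~\ref{theo:mixing_for_flows} together with properties~(ii) and~(iii) of Theorem~\ref{theo:mix_decomposition}.
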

\begin{proof}
Follows immediately from Theorem~\ref{theo:mixing_for_flows}.
\end{proof}

We can introduce a decomposition of a stationary max-stable process $\eta$ into mixing and non-mixing components as follows: $\eta=\eta_0\vee \eta_+$ with
\[
\eta_0(x) = \int_{N_0}^e f_x(s)M(\mathrm{d}s)
\quad \mbox{and}\quad
\eta_+=\int_{N_+}^e f_x(s) M(\mathrm{d}s),
\quad
x\in\cX.
\]
Clearly, $\eta_0$ and $\eta_+$ are independent stationary max-stable processes. Using argumentation as in the proof of Theorem~2.4 in~\cite{S05} (mapping to the minimal representation), it can be shown that the laws of $\eta_0$ and $\eta_+$ do not depend on the choice of the flow representation.

\subsection{An open question}
We have provided characterizations of the null recurrent and the dissipative components of a max-stable process in terms of its  spectral functions, see condition~(f) in Theorem~\ref{theo:ergodicity} and conditions~(c)-(d) in Theorem~\ref{theo:MMM}. This allows us to obtain the positive/null and conservative/dissipative decompositions of a max-stable process given by de Haan representation \eqref{eq:deHaan} directly via cone decompositions (see Proposition~\ref{prop} and Corollary~\ref{cor}). We have also provided a new decomposition into mixing/non mixing components. It is  therefore natural to ask whether a pathwise characterization of this decomposition is available. In view of the equivalence (e)-(f) in Theorem~\ref{theo:ergodicity}, we can wonder whether mixing can be characterized by the condition
\begin{equation}\label{eq:mixing_question}
\liminf_{x\to\infty} Y(x) = 0 \quad \text{a.s.}
\end{equation}
The answer is negative. Although mixing implies~\eqref{eq:mixing_question} (because mixing is equivalent to $Y(x) \to 0$ in probability  which implies a.s.\ convergence to $0$ along a subsequence), the converse is not true. We shall show that a counterexample is provided by a process constructed in~\cite{KS10}.

Consider a max-stable process $\eta(t)= \vee_{i=1}^{\infty} U_i Y_i(t)$ as in~\eqref{eq:deHaan}, where the spectral functions $(Y_i)_{i\in\bbN}$ are i.i.d.\ copies of the log-normal process
\begin{equation}\label{eq:def_Brown_Resnick_Y}
Y(t) = \exp\left\{Z(t) - \frac 12 \sigma^2(t)\right\}, \quad t\in\bbR,
\end{equation}
with $(Z(t))_{t\in \mathbb R}$  a zero-mean Gaussian process with stationary increments, $Z(0)=0$, and incremental variance
$$
\sigma^2(t) := \Var (Z(s+t)-Z(s)) = \sum_{k=1}^{\infty} \left(1-\cos\left(\frac{2\pi t}{2^k}\right)\right).
$$
An explicit series representation of $(Z(t))_{t\in \mathbb R}$ is given by
\[
Z(t)=\frac{1}{\sqrt{2}}\sum_{k=1}^\infty \left(N_k'\left(1-\cos \frac{2\pi t}{2^k}\right)+N_k''\sin \frac{2\pi t}{2^k} \right),
\]
where $N_k',N_k''$, $k\in\mathbb N$, are independent standard normal random variables.
The max-stable process $\eta$ belongs to the family of the so-called Brown--Resnick processes and is stationary; see \cite{KSdH09}.
\begin{proposition}
The max-stable process $\eta$ is ergodic but non-mixing although it satisfies~\eqref{eq:mixing_question}.
\end{proposition}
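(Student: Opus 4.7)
The plan is to reduce each of the three assertions to a deterministic property of the variance function $\sigma^2(x)$. Since $Z(0)=0$ and $\sigma^2(0)=0$ we have $Y(0)=1$ a.s., and a direct log-normal computation gives
\[
\mathbb{E}[Y(x)\wedge Y(0)]=\mathbb{E}[\min(e^{Z(x)-\sigma^2(x)/2},1)]=2\Phi(-\sigma(x)/2),
\]
where $\sigma(x):=\sqrt{\sigma^2(x)}$ and $\Phi$ denotes the standard normal CDF. Theorems~\ref{theo:ergodicity} and~\ref{theo:mixing} then translate mixing and ergodicity of $\eta$ into statements about the growth of $\sigma^2$.

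For non-mixing I would simply compute $\sigma^2(2^n)$. Because $2\pi\cdot 2^n/2^k$ is an integer multiple of $2\pi$ for every $k\le n$, the first $n$ terms of the series defining $\sigma^2(2^n)$ vanish, and reindexing the remaining terms shows $\sigma^2(2^n)=\sigma^2(1)$ for all $n\ge 0$. Consequently $\mathbb{E}[Y(2^n)\wedge Y(0)]=2\Phi(-\sigma(1)/2)$ is a strictly positive constant, condition~(c) of Theorem~\ref{theo:mixing} fails, and $\eta$ is not mixing.

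For ergodicity I would verify condition~(d) of Theorem~\ref{theo:ergodicity}, namely $r^{-1}\int_{0}^{r}2\Phi(-\sigma(x)/2)\,\mathrm{d}x\to 0$ (using that $\sigma^2$ is even). Write $\sigma^2=\sum_{k\ge 1}A_k$ with $A_k(x)=1-\cos(2\pi x/2^k)$. On $[0,2^N]$ each $A_k$ has mean $1$ and variance $1/2$, and the cosines $\cos(2\pi x/2^k)$ for $k=1,\dots,N$ are pairwise orthogonal (distinct frequencies, common period $2^N$). Hence $S_N:=\sum_{k=1}^{N}A_k$ has mean $N$ and variance $N/2$ on $[0,2^N]$, and Chebyshev's inequality yields
\[
\frac{1}{2^N}\bigl|\{x\in[0,2^N]:\sigma^2(x)\le N/2\}\bigr|\le\frac{1}{2^N}\bigl|\{x\in[0,2^N]:S_N(x)\le N/2\}\bigr|\le\frac{2}{N}.
\]
Sandwiching an arbitrary $r$ between $2^N$ and $2^{N+1}$ shows that for every fixed $M$ the density of $\{\sigma^2\le M\}$ in $[0,r]$ tends to $0$. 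Splitting the integral according to whether $\sigma(x)$ exceeds $\sqrt{M}$, and then letting $r\to\infty$ followed by $M\to\infty$, gives condition~(d).

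For~\eqref{eq:mixing_question} I would exhibit an explicit deterministic sequence $x_n\to\infty$ along which $Y(x_n)\to 0$ a.s. Take $x_n=2^n/3$: since $2^{n-k}\bmod 3\in\{1,2\}$ for $k\le n$, one has $\cos(2\pi\cdot 2^{n-k}/3)=-\tfrac12$, each of the first $n$ terms contributes $3/2$, and so $\sigma^2(x_n)\ge 3n/2$. Writing $Z(x_n)=\sigma(x_n)\xi_n$ with $\xi_n\sim N(0,1)$, the standard Gaussian tail bound yields $\mathbb{P}[Y(x_n)>\epsilon]\le\exp(-cn)$ for large $n$ and some $c>0$, which is summable; Borel--Cantelli gives $Y(x_n)\le\epsilon$ eventually a.s., and intersecting over $\epsilon=1/m$ shows $\liminf_{x\to\infty}Y(x)=0$ a.s. The main technical obstacle I expect is the ergodicity step, where the heuristic ``$\sigma^2$ is typically large'' must be turned into a uniform control of the Cesàro average as $r\to\infty$; the orthogonality-plus-Chebyshev argument above is what I see as the key simplification.
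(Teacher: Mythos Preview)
Your argument is correct in all three parts. The paper itself proves only~\eqref{eq:mixing_question}, citing~\cite{KS10} for ergodicity and non-mixing, and its proof of~\eqref{eq:mixing_question} is essentially the same as yours: pick a sequence $x_n\to\infty$ with $\sigma^2(x_n)\to\infty$ and apply Borel--Cantelli to the Gaussian tail $\bbP[Y(x_n)>\eps]=\bbP[N>\log\eps/\sigma(x_n)+\sigma(x_n)/2]$. The only difference is that the paper invokes the existence of such a sequence from~\cite{KS10} and passes to a subsequence with $\sigma^2(x_n)>n^2$, whereas you give the explicit choice $x_n=2^n/3$ and the lower bound $\sigma^2(x_n)\ge 3n/2$.

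Where your write-up genuinely diverges is in supplying self-contained proofs of non-mixing and ergodicity via the closed form $\bbE[Y(x)\wedge Y(0)]=2\Phi(-\sigma(x)/2)$. Your non-mixing argument ($\sigma^2(2^n)=\sigma^2(1)$ by reindexing) is exactly the computation underlying~\cite{KS10}; your ergodicity argument, treating $S_N=\sum_{k\le N}A_k$ as a sum of orthogonal random variables on $[0,2^N]$ with mean $N$ and variance $N/2$ and applying Chebyshev, is a clean and elementary way to quantify ``$\sigma^2$ is typically large'' and verify condition~(d) of Theorem~\ref{theo:ergodicity}. This buys you independence from the reference~\cite{KS10}, at the cost of a few extra lines; the paper's approach trades those lines for a citation.
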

\begin{proof}
The fact that $\eta$ is ergodic but non-mixing was proven in~\cite{KS10}. We show here that Equation \eqref{eq:mixing_question} is satisfied.
It was shown in~\cite{KS10} that there is a sequence $x_1<x_2<\ldots\to+\infty$ such that $\lim_{n\to\infty} \sigma^2(x_n) = +\infty$. Passing, if necessary, to a subsequence, we can assume that $\sigma^2(x_n) > n^2$. For every $\eps \in (0,1)$ we have
$$
\bbP[Y(x_n) > \eps]
=
\bbP\left[Z(x_n) > \log \eps + \frac{1}{2}\sigma^2(x_n)\right]
=
\bbP\left[N > \frac{\log \eps}{\sigma(x_n)} + \frac{1}{2}\sigma (x_n)\right],
$$
where $N$ is a standard normal random variable. It follows that
$$
\sum_{n=1}^{\infty} \bbP[Y(x_n) > \eps]  \leq \sum_{n=1}^{\infty} \bbP\left[N > \frac{n}{2} + \log \eps\right] < \infty.
$$
By the Borel--Cantelli lemma, the probability that only finitely many events $\{Y(x_n)>\eps\}$ occur equals $1$. Since this holds for every $\eps\in (0,1)$,
we obtain that $\lim_{n\to\infty} Y(x_n) = 0$ a.s.\ and this implies \eqref{eq:mixing_question}.
\end{proof}

%\begin{remark}
%The process $\eta$ constructed above is generated by a conservative, null flow. 
%\end{remark}
%\begin{lemma}\label{lem:Gauss_not_go_to_infty}
%For every zero-mean Gaussian process $(X_n)_{n\in\bbN}$ with
%$$
%M:=\sup_{n\in\bbN} \Var X_n < \infty,
%$$
%the probability of the event
%$
%A:=\{\lim_{n\to\infty} X_n =-\infty\}
%$
%is zero.
%\end{lemma}
%\begin{proof}
%For every $R>0$ we have
%$$
%\bbP[A] \leq \bbP\left[\bigcup_{n=1}^{\infty} \bigcap_{k=n}^{\infty} \{X_k<-R\}\right]
%=
%\lim_{n\to\infty}\bbP\left[\bigcap_{k=n}^{\infty} \{X_k<-R\}\right]
%\leq
%\liminf_{n\to\infty}\bbP [X_n<-R].
%$$
%Recalling that $X_n$ is zero-mean Gaussian with $\Var X_n \leq M$, we obtain that
%$$
%\bbP[A] \leq \bbP[N < -R/M],
%$$
%where $N$ is standard normal random variable. Since this holds for every $R>0$, we obtain that $\bbP[A]=0$.
%\end{proof}

\bibliographystyle{plain}
\bibliography{Biblio}

\begin{thebibliography}{10}

\bibitem{A98}
J.~Aaronson.
\newblock {\em An introduction to infinite ergodic theory}, volume~50 of {\em
  Mathematical Surveys and Monographs}.
\newblock American Mathematical Society, Providence, RI, 1997.

\bibitem{danilenko_silva}
A.~I. Danilenko and C.~E. Silva.
\newblock Ergodic theory: non-singular transformations.
\newblock In {\em Mathematics of complexity and dynamical systems. {V}ols.
  1--3}, pages 329--356. Springer, New York, 2012.

\bibitem{dH84}
L.~de~Haan.
\newblock A spectral representation for max-stable processes.
\newblock {\em Ann. Probab.}, 12(4):1194--1204, 1984.

\bibitem{dHF06}
L.~de~Haan and A.~Ferreira.
\newblock {\em Extreme value theory. An introduction}.
\newblock Springer Series in Operations Research and Financial Engineering.
  Springer, New York, 2006.

\bibitem{dHP86}
L.~de~Haan and J.~Pickands, III.
\newblock Stationary min-stable stochastic processes.
\newblock {\em Probab. Theory Relat. Fields}, 72(4):477--492, 1986.

\bibitem{DK15a}
C.~Dombry and Z.~Kabluchko.
\newblock Random tessellations associated with max-stable random fields.
\newblock Preprint arXiv:1410.2584, 2015.

\bibitem{K09}
Z.~Kabluchko.
\newblock Spectral representations of sum- and max-stable processes.
\newblock {\em Extremes}, 12(4):401--424, 2009.

\bibitem{KS10}
Z.~Kabluchko and M.~Schlather.
\newblock Ergodic properties of max-infinitely divisible processes.
\newblock {\em Stochastic Process. Appl.}, 120(3):281--295, 2010.

\bibitem{KSdH09}
Z.~Kabluchko, M.~Schlather, and L.~de~Haan.
\newblock Stationary max-stable fields associated to negative definite
  functions.
\newblock {\em Ann. Probab.}, 37(5):2042--2065, 2009.

\bibitem{K85}
U.~Krengel.
\newblock {\em Ergodic theorems}, volume~6 of {\em de Gruyter Studies in
  Mathematics}.
\newblock Walter de Gruyter \& Co., Berlin, 1985.

\bibitem{krengel_sucheston_BULL}
U.~Krengel and L.~Sucheston.
\newblock On mixing in infinite measure spaces.
\newblock {\em Bull. Amer. Math. Soc.}, 74:1150--1155, 1968.

\bibitem{krengel_sucheston}
U.~Krengel and L.~Sucheston.
\newblock On mixing in infinite measure spaces.
\newblock {\em Z. Wahrscheinlichkeitstheorie und Verw. Gebiete}, 13:150--164,
  1969.

\bibitem{R95}
J.~Rosi{\'n}ski.
\newblock On the structure of stationary stable processes.
\newblock {\em Ann. Probab.}, 23(3):1163--1187, 1995.

\bibitem{R00}
J.~Rosi{\'n}ski.
\newblock Decomposition of stationary {$\alpha$}-stable random fields.
\newblock {\em Ann. Probab.}, 28(4):1797--1813, 2000.

\bibitem{RS96}
J.~Rosi{\'n}ski and G.~Samorodnitsky.
\newblock Classes of mixing stable processes.
\newblock {\em Bernoulli}, 2(4):365--377, 1996.

\bibitem{roy_ID}
E.~Roy.
\newblock Ergodic properties of {P}oissonian {ID} processes.
\newblock {\em Ann. Probab.}, 35(2):551--576, 2007.

\bibitem{R10}
P.~Roy.
\newblock Nonsingular group actions and stationary {$S\alpha S$} random fields.
\newblock {\em Proc. Amer. Math. Soc.}, 138(6):2195--2202, 2010.

\bibitem{RS08}
P.~Roy and G.~Samorodnitsky.
\newblock Stationary symmetric {$\alpha$}-stable discrete parameter random
  fields.
\newblock {\em J. Theoret. Probab.}, 21(1):212--233, 2008.

\bibitem{S04}
G.~Samorodnitsky.
\newblock Maxima of continuous-time stationary stable processes.
\newblock {\em Adv. in Appl. Probab.}, 36(3):805--823, 2004.

\bibitem{S05}
G.~Samorodnitsky.
\newblock Null flows, positive flows and the structure of stationary symmetric
  stable processes.
\newblock {\em Ann. Probab.}, 33(5):1782--1803, 2005.

\bibitem{S02}
M.~Schlather.
\newblock Models for stationary max-stable random fields.
\newblock {\em Extremes}, 5(1):33--44, 2002.

\bibitem{S08}
S.~A. Stoev.
\newblock On the ergodicity and mixing of max-stable processes.
\newblock {\em Stochastic Process. Appl.}, 118(9):1679--1705, 2008.

\bibitem{ST05}
S.~A. Stoev and M.~S. Taqqu.
\newblock Extremal stochastic integrals: a parallel between max-stable
  processes and {$\alpha$}-stable processes.
\newblock {\em Extremes}, 8(4):237--266 (2006), 2005.

\bibitem{WRS13}
Y.~Wang, P.~Roy, and S.A. Stoev.
\newblock Ergodic properties of sum- and max-stable stationary random fields
  via null and positive group actions.
\newblock {\em Ann. Probab.}, 41(1):206--228, 2013.

\bibitem{WS10_association}
Y.~Wang and S.A. Stoev.
\newblock On the association of sum- and max-stable processes.
\newblock {\em Statist. Probab. Lett.}, 80(5-6):480--488, 2010.

\bibitem{WS10}
Y.~Wang and S.A. Stoev.
\newblock On the structure and representations of max-stable processes.
\newblock {\em Adv. in Appl. Probab.}, 42(3):855--877, 2010.

\end{thebibliography}

\end{document}